\newtheorem{lemma}{Lemma}[section]
\newtheorem{theorem}{Theorem}[section]
\newtheorem{definition}{Definition}[section]
\newtheorem{proposition}{Proposition}[section]
\numberwithin{equation}{section}
\newcommand{\R}{\mathbb{R}}
\renewcommand{\S}{\mathbb{S}}
\newcommand{\na}{\nabla}
\newcommand{\ga}{\gamma}
\newcommand{\om}{\omega}
\newcommand{\Om}{\Omega}
\newcommand{\la}{\lambda}
\newcommand{\de}{\delta}
\newcommand{\pa}{\partial}
\newcommand{\eps}{\epsilon}
\newcommand{\vps}{\varepsilon}
\begin{document}
\title[Boltzmann equation with weakly inhomogeneous data in bounded domain]{The Boltzmann equation with weakly inhomogeneous data in bounded domain}

\author[Y. Guo]{Yan Guo}
\address[YG]{Division of Applied Mathematics, Brown University, Providence 02912, USA}
\email{${\rm Yan}_-{\rm Guo}$@brown.edu}

\author[S.-Q. Liu]{Shuangqian Liu}
\address[SQL]{Department of Mathematics, Jinan University, Guangzhou 510632, P.R.~China}
\email{tsqliu@jnu.edu.cn}
%\date{May 10, 2014}

\begin{abstract}
This paper is concerned with the Boltzmann equation with specular reflection boundary condition. We construct a unique  global solution and obtain its large time asymptotic behavior in the case that the initial data is close enough to a radially symmetric homogeneous datum. The result extends the case of Cauchy problem considered by Arkeryd-Esposito-Pulvirenti [Comm. Math. Phys. 111(3): 393-407 (1987)] to the specular reflection boundary value problem in bounded domain.
\end{abstract}

\keywords{Weakly inhomogeneous data, specular reflection boundary condition, $L^2-L^\infty$ approach.}

\subjclass[2010]{Primary:  35Q20, Secondary: 35B07, 35B40.}

\maketitle

\thispagestyle{empty}
\tableofcontents

%\newpage
\section{Introduction}
\subsection{The problem}
If a dilute gas is contained in a bounded region with completely smooth boundary surface and the gas molecules collide the surface
elastically,
the motion of those gas particles can be modeled by the following initial value problem for the Botlzmann
equation with specular reflection boundary condition
\begin{eqnarray}\label{be}
\left\{\begin{array}{lll}
\begin{split}
&\pa_tF+v\cdot\na_x F=Q(F,F),\ t>0, \ x\in\Omega,\ v\in\R^3,\\
&F(0,x,v)=F_0(x,v),\ x\in\Omega,\ v\in\R^3,\\
&F(t,x,v)|_{n(x)\cdot v<0}=F(t,x,R_xv),\ R_xv=v-2(v\cdot n(x))n(x),\ \ t\geq0,\ x\in\Omega,\ v\in\R^3.
\end{split}
\end{array}\right.
\end{eqnarray}
Here, $F(t,x,v)\geq0$ denotes the density distribution function of the gas particles at time $t\geq0$, position $x\in\Omega$, and velocity $v\in\R^3$,
$\Omega$ is a bounded domain in $\R^3$, $n(x)$ is the outward pointing unit norm vector at boundary $x\in\pa\Omega$.
%and $\mu(v)$ stands for the global Maxwellian which is normalized as
%\begin{equation*}
%\mu(v)=\frac{1}{2\pi}e^{-\frac{|v|^2}{2}},
%\end{equation*}
%so that
%\begin{equation}\label{mu.n}
%\int_{n(x)\cdot v>0}\mu(v)(n(x)\cdot v)dv=1.
%\end{equation}
The Boltzmann collision operator $Q(\cdot,\cdot)$ for hard sphere model is given as the following non-symmetric form
\begin{equation*}\label{n.op}
\begin{split}
Q(F_1,F_2)=&\int_{\R^3\times\S^2}|(u-v)\cdot\omega|[F_1(u')F_2(v')-F_1(u)F_2(v)]dud\omega\\
=&Q_{\textrm{gain}}(F_1,F_2)-Q_{\textrm{loss}}(F_1,F_2),
\end{split}
\end{equation*}
here, $(u,v)$ and $(u', v')$ stand for the velocities of the particles before and after the collision and satisfy
\begin{eqnarray*}\label{v.re}
\left\{\begin{array}{lll}
\begin{split}
&v'=v+[(u-v)\cdot\omega]\om,\ \ u'=u-[(u-v)\cdot\omega]\om,\\
&|u|^2+|v|^2=|u'|^2+|v'|^2.\end{split}\end{array}\right.
\end{eqnarray*}

In the present paper, we study the existence of unique global solution of \eqref{be} and its time asymptotic behaviors when the initial particle distribution $F_0(x,v)$ is a small perturbation around a spatially homogeneous radially symmetric datum. Specifically speaking, let $G_0(v)=G(|v|)$ be a spatially homogeneous datum and set $F_0=G_0+f_0$. We hope to construct unique global solution of \eqref{be} in the case that $f_0$ is small in a suitable sense.

It is known from \cite{Car-1933} that there exists a unique global radially symmetric solution $G(t,v)=G(t,|v|)$ to the Cauchy problem for the spatially homogeneous Boltzmann equation
\begin{eqnarray}\label{hbe}
\left\{\begin{array}{ll}
\begin{split}
&\pa_tG=Q(G,G),\ t>0,\ v\in\R^3,\\
&G(0,v)=G_0(v)=G_0(|v|),\ v\in\R^3.
\end{split}\end{array}\right.
\end{eqnarray}
Based on such a seminal work by Carleman, the aforementioned problem was first considered by Arkeryd-Esposito-Pulvirentic \cite{AEP-87}, in which they constructed the global existence for the Cauchy problem for the inhomogeneous Boltzmann equation with weakly inhomogeneous datum, namely the initial distribution is sufficiently close to the spatially homogeneous datum in \eqref{hbe}.
Since then, very few results were known regarding the inhomogeneous Botlzmann equation with weakly inhomogeneous datum, although there are extensive investigations on the homogeneous Boltzmann equation \eqref{hbe}, see \cite{Ark-83,Ark-88,GPV-09,Mo-2006} and reference therein.

%In the present paper, we study the existence of unique global solution of \eqref{be} and its time asymptotic behaviors when the initial particle distribution $F_0(x,v)$ is a small perturbation around an arbitrarily spatially homogeneous radially symmetric datum. Specifically speaking, let $G_0(v)=G(|v|)$ be a spatially homogeneous datum and set $F_0=G_0+f_0$. We hope to construct solutions of \eqref{be} on the condition that $f_0$ is small in a suitable sense.

%Following Carleman's breakthrough, there are extensive investigations on the spatially homogeneous Botlzmann equation and many fruitful achievements have been made, see \cite{} and references therein.

\subsection{Domain and Characteristics}
Throughout this paper, $\Omega $ is a connected and bounded domain in $
\R^3$, which is defined as
$
\Omega=\{x\in \R^3~|~\xi(x)<0\}
$
with $\xi(x)$ being a smooth function. Let $\na\xi(x)\neq0$ at boundary $\xi(x)=0$, the outward pointing unit normal vector at every point $x\in\pa\Omega$ is given by
$$
n(x)=\frac{\na\xi(x)}{|\na\xi(x)|}.
$$
If there exists $c_\xi>0$, for any $\zeta=(\zeta^1,\zeta^2,\zeta^3) \in \R^3$ such that
\begin{equation}\label{scon}
\pa_{ij}\xi(x)\zeta^i\zeta^j\geq c_\xi|\zeta|^2,
\end{equation}
then we say $\Om$ is {\it strictly convex}.
To take care of the specular reflection boundary condition $\eqref{be}_3$ (the third equation of \eqref{be}), we may also require  $\Omega $ has a {\it rotational symmetry}, that is, there are vectors $x_{0}$
and $\varpi ,$ such that for all $x\in \partial \Omega $
\begin{equation}
\{(x-x_{0})\times \varpi \}\cdot n(x)\equiv 0.  \label{axis}
\end{equation}

For the sake of convenience, the phase boundary in the phase space $\Omega \times \R
^{3} $ is denoted by $\gamma =\partial \Omega \times \R^{3}$, and we further split it into
the following three kinds:
%the outgoing boundary $\gamma _{+}$, the incoming boundary $\gamma _{-}$,
%and the grazing boundary $\gamma _{0}$:
\begin{eqnarray*}
\textrm{outgoing boundary}: \gamma _{+} &=&\{(x,v)\in \partial \Omega \times \R^{3}\ :\
n(x)\cdot v> 0\}, \\
\textrm{incoming boundary}: \gamma _{-} &=&\{(x,v)\in \partial \Omega \times \R^{3}\ :\
n(x)\cdot v< 0\}, \\
\textrm{grazing boundary}: \gamma _{0} &=&\{(x,v)\in \partial \Omega \times \R^{3}\ :\
n(x)\cdot v=0\}.
\end{eqnarray*}%
%As it is shown \cite[pp.715]{Guo-2010}, the \textit{backward exit time} which plays a significant role in the study of boundary value problem of the Botlzmann equation can be well-defined via the backward characteristic trajectory.
Given $(t,x,v)$, we let $[X(s),V(s)]$ satisfy
\begin{equation*}%\label{ch.eq}
\frac{dX(s)}{ds}=V(s),\ \ \frac{dV(s)}{ds}=0,
\end{equation*}
with the initial data $[X(t;t,x,v),V(t;t,x,v)]=[x,v]$. Then $[X(s;t,x,v),V(s;t,x,v)]$$=[x-(t-s)v,v]$$=[X(s),V(s)]$, which is called as the backward characteristic trajectory for the Boltzmann equation
$\eqref{be}_1$.

For $%
(x,v)\in \Om\times \R^{3}$, the \textit{backward exit time} $t_{\mathbf{b}}(x,v)>0$ is defined to be the first moment at which the backward characteristic line
$[X(s;0,x,v),V(s;0,x,v)]$ emerges from $\pa\Om$:
\begin{equation*}\label{b.t}
t_{\mathbf{b}}(x,v)=\inf \{\ t> 0:x-tv\notin \partial \Omega \},
\end{equation*}
%\label{backexit}
%\end{equation}%
and we also define $x_{\mathbf{b}}(x,v)=x-t_{\mathbf{b}}(x,v)v\in \partial \Omega $. Note that for any $(x,v)$, we use $t_{\mathbf{b}}(x,v)$
whenever it is well-defined.

\subsection{Main results}
Our main goal in the paper is to prove the global existence and obtain the time asymptotic behaviors of the initial boundary value problem \eqref{be}
on the condition that the initial datum $F_0$ is sufficiently close to a homogeneous initial datum $G_0(v)$ in an appropriate sense.
Let us first assume the homogeneous data $G_0$ shares the same mass and energy conservation laws with $F_0$:
\begin{eqnarray}\label{con.id}
\int_{\R^3}G_0\left(\begin{array}{rll}
&1\\
&v^2
\end{array}\right)dv=\int_{\R^3}F_0\left(\begin{array}{rll}
&1\\
&v^2
\end{array}\right)dv=\left(\begin{array}{rll}
&1\\
&3
\end{array}\right).
\end{eqnarray}
Moreover, if the domain $\Omega $ has any axis of rotational symmetry
\eqref{axis}, then we further assume the corresponding conservation of
angular momentum is valid for the initial data $F_0$
\begin{equation}
\int_{\Omega \times \R^{3}}\{(x-x_{0})\times \varpi \}\cdot vF_0(t,x,v)%
dxdv=0.  \label{axiscon}
\end{equation}
Through the paper, the global Maxwellian $\mu$ is defined as $(2\pi)^{-3/2}e^{-\frac{|v|^2}{2}}$.
Denote $\langle v\rangle=\sqrt{1+|v|^2}$ and we introduce a velocity weight $w_l=\langle v\rangle^l$ with $l\geq0.$
%Let $1\leq p\leq \infty$,
We use $\Vert \,\cdot \,\Vert _{\infty}$ to denote the $L^{\infty}(\Omega
\times \R_v^{3})-$norm or $L^{\infty}(\R_v^{3})-$norm.
%while $|\,\cdot \,|_{\infty }$ is either the $L^{\infty }(\partial \Omega
%\times \R^{3})-$norm or the $L^{\infty }(\partial \Omega )-$norm at
%the boundary.
Moreover,
%we denote $\|\cdot \|_{\nu}\equiv \|\nu^{1/2}\cdot
%\|_2$, and
$(\cdot,\cdot)$ denotes the $L^{2}$ inner product in
$\Omega\times {\R}^{3}$  with
the $L^{2}$ norm $\|\cdot\|_2$.

We now state our main results in the following theorem.
\begin{theorem}\label{mth}
Assume that $\xi $ is both strictly convex \eqref{scon} and analytic. Suppose \eqref{con.id} is valid. In the case of $\Omega $ has any rotational symmetry \eqref{axis}, we
further require the corresponding angular momentum \eqref{axiscon} holds. There exists $l_0 >6$ such that if $\|w_{l_0}G_0\|_\infty<\infty$, then there exists $\vps_0>0$  such that for $F_{0}(x,v)\geq 0$ and $\|w_{l_0}[F_{0}-G_0]\|_{\infty }\leq \vps_0 ,$ there
exists a unique solution $F(t,x,v)\geq 0$ to the Boltzmann equation \eqref{be} satisfying $\sup\limits_{0\leq t\leq \infty}\|w_{l_0}F(t)\|_\infty<\infty$. Moreover,
\begin{equation*}
\|w_{l_0}[F(t)-\mu]\|_{\infty }\rightarrow 0,\ \textrm{as}\ t\rightarrow\infty.
\end{equation*}%
In addition, if $F_{0}(x,v)$ is continuous except on the set $\gamma _{0}$, %and
%\begin{equation*}
%F_{0}(x,v)=F_{0}(x,R_xv)\text{ on }\partial \Omega,
%\end{equation*}%
then $F(t,x,v)$ is continuous in $[0,\infty )\times \{\bar{\Omega}\times
\R^{3}\setminus \gamma _{0}\}.$
\end{theorem}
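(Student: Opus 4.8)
The plan is to analyze the solution relative to the spatially homogeneous profile. Since $G_0=G_0(|v|)$ is radial, Carleman's theorem and its later refinements furnish a global radial solution $G(t,v)=G(t,|v|)$ of \eqref{hbe}; by \eqref{con.id} one has a uniform bound $\sup_{t\ge0}\|w_{l_0}G(t)\|_\infty\le C_G<\infty$ and $\|w_{l_0}[G(t)-\mu]\|_\infty\to0$ as $t\to\infty$ (both facts drawn from the homogeneous theory: moment production and the Maxwellian lower bound). Crucially, because $|R_xv|=|v|$, we have $G(t,R_xv)=G(t,v)$ for all $x\in\partial\Omega$, so $G$ itself satisfies the specular condition $\eqref{be}_3$. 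Writing $F=G+f$, the perturbation solves
\begin{equation*}
\partial_t f+v\cdot\nabla_x f=Q(G,f)+Q(f,G)+Q(f,f),\qquad f|_{\gamma_-}(t,x,v)=f(t,x,R_xv),\qquad f(0)=f_0 ,
\end{equation*}
and \eqref{con.id}, \eqref{axiscon} become $\int_{\Omega\times\R^3} f(t)(1,|v|^2)\,dxdv=0$ for all $t$, and, in the rotationally symmetric case, $\int_{\Omega\times\R^3} f(t)\{(x-x_0)\times\varpi\}\cdot v\,dxdv=0$.

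First I would handle a finite time slab $[0,T_0]$. On it, $Q(G,\cdot)+Q(\cdot,G)$ is a bounded perturbation of multiplication by a collision frequency $\nu_G(t,v)\sim\langle v\rangle$, so a standard iteration in $w_{l_0}L^\infty$ along the specular characteristics works; here the strict convexity and analyticity of $\xi$ are used to control the billiard bounces and the measure of near-grazing trajectories up to time $T_0$. This produces, for each fixed $T_0$, constants $\vps_1(T_0),C(T_0)>0$, depending only on $T_0$ and $C_G$, such that $\|w_{l_0}f_0\|_\infty\le\vps_1(T_0)$ yields a unique solution on $[0,T_0]$ with $\sup_{0\le t\le T_0}\|w_{l_0}f(t)\|_\infty\le C(T_0)\|w_{l_0}f_0\|_\infty$; the scheme preserves $F=G+f\ge0$ and propagates continuity off $\gamma_0$. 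No decay is available on this slab, only Gronwall growth $e^{CT_0}$, but that is harmless because $T_0$ will be frozen first.

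Next come the matching and the global phase. Using $G(t)\to\mu$, choose $T_0$ so large that $\|w_{l_0}[G(T_0)-\mu]\|_\infty$ is below half the smallness threshold of the near-Maxwellian $L^2$–$L^\infty$ theory for the specular reflection problem in strictly convex analytic domains; then pick $\vps_0\le\vps_1(T_0)$ so small that $C(T_0)\vps_0$ lies below the other half. At $t=T_0$ this gives $\|w_{l_0}[F(T_0)-\mu]\|_\infty\le\|w_{l_0}[F(T_0)-G(T_0)]\|_\infty+\|w_{l_0}[G(T_0)-\mu]\|_\infty$ small, while the mass, energy and (when applicable) angular momentum of $F(T_0)$ equal those of $\mu$, so the macroscopic part of $F(T_0)-\mu$ sits in the range where the coercivity estimate applies. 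Invoking the global existence, the uniform $w_{l_0}L^\infty$ bound, the asymptotic stability $\|w_{l_0}[F(t)-\mu]\|_\infty\to0$, positivity and continuity-off-$\gamma_0$ of the $L^2$–$L^\infty$ method on $[T_0,\infty)$, and gluing with the slab $[0,T_0]$, yields the theorem; uniqueness holds on each piece and follows overall from a Gronwall estimate on the difference of two solutions.

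The main obstacle is precisely this two–regime structure: on the transient slab the linearized operator $Q(G(t),\cdot)+Q(\cdot,G(t))$ is non-autonomous, non-self-adjoint and possesses no spectral gap, so one cannot hope for decay there and must instead run the full specular $L^\infty$/transport machinery — grazing-set and bounce estimates under convexity and analyticity — with only a $T_0$-dependent constant, and the whole argument hinges on the scale separation that $T_0$ is dictated solely by the geometry-independent relaxation of $G(t)$ and is chosen before $\vps_0$. A secondary, genuinely quantitative difficulty is extracting from the spatially homogeneous hard-sphere theory a uniform-in-time weighted $L^\infty$ bound for $G(t)$ and its convergence to $\mu$ in $w_{l_0}L^\infty$, which is what makes the matching inequality usable.
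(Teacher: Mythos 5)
Your proposal follows essentially the same route as the paper: decompose $F=G+f$ with $G$ the Carleman homogeneous solution (which satisfies the specular condition by radial symmetry), run a weighted $L^\infty$ iteration along specular characteristics to get a local solution on $[0,T_0]$ with a $T_0$-dependent Gronwall constant, choose $T_0$ from the relaxation $G(t)\to\mu$ and only then shrink $\vps_0$, and conclude on $[T_0,\infty)$ with the near-Maxwellian $L^2$--$L^\infty$ theory (which the paper implements via the splitting $h=h_1+\sqrt{\mu}h_2$ and Caflisch's decomposition of $K$). The ordering of the two smallness choices and the matching inequality at $t=T_0$ are exactly the paper's argument, so the proposal is correct and not a genuinely different proof.
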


Let us now give a brief review on the existing results devoted to the Boltzmann equation with specular reflection boundary condition. For the initial boundary value problem \eqref{be}, if the initial data $F_0$ is allowed to be sufficiently close to the the absolute Maxwellian $\mu$, many advances have occurred over the past few decades. Shizuta-Asano \cite{SA-77} announced that the solutions to the Boltzmann equation near a Maxwellian would tend exponentially to the same equilibrium in a smooth bounded convex domain, although there is no complete rigorous proof in the paper.
Ukai \cite{Ukai-86} presented a general idea for proving the existence and time convergence to a global Maxwellian for the initial boundary value problem with hard potential. Golse-Perthame-Sulem \cite{GPS} investigated the stationary boundary layers of the Boltzmann equation around the global Maxellian in half spatial space in the case of hard spheres model.  By applying a basic energy method, Yang-Zhao \cite{YZ} proved the stability of the rarefaction waves for the one dimensional Boltzmann equation in half space. Based on the a priori assumption that some strong Sobolev estimates can be
obtained, Desvillettes and Villani \cite{Des-90, DV-05, V1, V2} established an almost exponential decay rate for
Boltzmann solutions with large amplitude for general collision kernels and general
boundary conditions. However,
unlike the Cauchy problem for the Boltzmann equation in the whole space or on the torus, the solutions of the Boltzamnn equation with many of physical boundary conditions may create singularities in general domains \cite{Kim}, this is one of the major mathematical obstacles to
study the nonlinear Boltzmann equation with boundary conditions in the Sobolev space. Recently, the first author of the paper developed an  $L^2-L^\infty$ theory to establish the time decay and continuity of the unique global solution of the Boltzmann equation with four basic boundary conditions: in flow, bounce back, specular reflection and diffuse reflection \cite{Guo-2010}. The result is then extended to the soft potential case by the second author of the paper and Yang \cite{LY-2016}. More recently, the $W^{1,p}$ $(1<p<2)$ regularity for the Botlzmann equation in general classes of bounded domain were further proved \cite{GKTT-12} . There is also a huge number of literatures concerning the mathematical studies for the other kinds of boundary conditions, we refer to \cite{AC-93,Cer-92,CIP,EGM,EGKM-13,GKTT-14,Ha-92,LY-im,LY-IBP-07,LY-DCDS,Yu} and references therein. Nevertheless, if the initial data $F_0(x,v)$ is away from a global Maxwellian, we are not aware of any results for the initial boundary value problem \eqref{be}. Our results in Theorem \ref{mth}
appears to be the first one devoted to the study of this problem.

Following the ideas developed in \cite{AEP-87}, the proof for Theorem \ref{mth} is divided into three steps: Step 1, we show that there exists a unique inhomogeneous solution $F$ which is close enough to a homogeneous solution determined by \eqref{hbe}, at least for a finite time interval which is inversely proportional to the size of the initial amplitude; Step 2, with the aid of an $L^2-L^\infty$ argument, we prove that $F$ tends to Maxwellian equilibrium exponentially provided that the initial discrepancy is sufficiently small in a suitable sense; Step 3, by combing the first step and the fact that the homogeneous solution $G$ converges to the global maxwellian $\mu$ as time goes to infinity, we find a time at which the
inhomogeneous solution $F$ enters the small neighbourhood of the global maxwellian $\mu$, this together with the second step enables one to extend the local solution to the global one.

The organization of the paper is arranged as follows: In section \ref{pre}, we present some known results which will be used in the subsequent sections. In
Section \ref{lochom}, we establish a unique local solution to the Boltzmann equation with specular reflection boundary condition, in particular, such a local solution is close enough to a homogeneous solution for the corresponding
homogeneous Boltzmann equation. Section \ref{smsol} is devoted to the existence of a global small solution to the initial boundary value problem of the Boltzmann equation around an absolute Maxwellian.
The proof of Theorem \ref{mth} is concluded in the Section \ref{proof}.

{\it Notations:}
We now list some notations used in the paper.
 Throughout this paper,  $C$ denotes some generic positive (generally large) constant and $\la,\la_1,\la_2$ as well as $\la_0$ denote some generic positive (generally small) constants, where $C$ may take different values in different places. $D\lesssim E$ means that  there is a generic constant $C>0$
such that $D\leq CE$. $D\sim E$
means $D\lesssim E$ and $E\lesssim D$. For brevity,
 We also denote $f_{\pm}=f|_{\gamma_{\pm}}=f\mathbf{1}_{\gamma_{\pm}}$.

\section{Preliminary}\label{pre}
In this section, we collect some basic results and significant estimates which will be used in the latter proof. The first one is concerned with the global existence of \eqref{hbe}.
\begin{proposition}\label{Gex}\cite[pp.133, pp146]{Car-1933}Assume $G_0(v)=G_0(|v|)\geq0$ satisfies \eqref{con.id}, if $\|w_{l_1}G_0\|_\infty<\infty$ for $l_1>6$, then the Cauchy problem \eqref{hbe} admits a unique global radially symmetric solution $G(t,v)\geq0$ satisfying
\begin{equation}\label{Ges}
\sup\limits_{t\geq 0}\|w_{l_1} G(t)\|_\infty\leq C_0,
\end{equation}
for some $C_0>0.$ Moreover $G(t,v)$ is continuous if $G_0(v)$ is continuous, and there exists $l_1>l_0>6$ such that
\begin{equation*}\label{Gla}
\sup\limits_{t\rightarrow\infty }\|w_{l_0} [G(t)-\mu]\|_\infty=0.
\end{equation*}
\end{proposition}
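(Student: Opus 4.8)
The plan is to prove Proposition~\ref{Gex} in three stages --- a local solution in weighted $L^\infty$, propagation of that bound uniformly in time, and convergence to $\mu$ via the $H$-theorem together with the uniform bounds --- thereby recovering Carleman's theorem in the notation used throughout this paper.

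\emph{Step 1 (local solution).} Decompose $Q(G,G)=Q_{\rm gain}(G,G)-G\,L[G]$, where $L[G](v)=\int_{\R^3}|v-u|\,G(u)\,du$ is the collision frequency, and recast \eqref{hbe} in the mild form
\begin{equation*}
G(t,v)=e^{-\int_0^tL[G](\tau,v)\,d\tau}G_0(v)+\int_0^te^{-\int_s^tL[G](\tau,v)\,d\tau}\,Q_{\rm gain}(G,G)(s,v)\,ds .
\end{equation*}
The iteration $G^0=G_0$, $G^{n+1}=\Phi(G^n)$ built from this formula preserves nonnegativity, since $Q_{\rm gain}\geq0$ and $L[G^n]\geq0$. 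From $|v|^2+|u|^2=|v'|^2+|u'|^2$ one gets $\langle v\rangle\leq\langle v'\rangle\langle u'\rangle$, hence $w_{l_1}(v)\leq w_{l_1}(v')w_{l_1}(u')$, which together with the Carleman change of variables yields the standard bound $w_{l_1}(v)\,Q_{\rm gain}(G_1,G_2)(v)\lesssim\langle v\rangle\,\|w_{l_1}G_1\|_\infty\|w_{l_1}G_2\|_\infty$ for $l_1>3$. Combined with $L[G]\geq0$, this makes $\Phi$ a self-map of a ball in $w_{l_1}L^\infty_v$ and a contraction on $[0,T_0]$ with $T_0$ depending only on $\|w_{l_1}G_0\|_\infty$, giving a unique nonnegative local solution. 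Rotational invariance of $Q$ and uniqueness force $G(t,v)=G(t,|v|)$, and the mild formula propagates continuity of $G_0$.

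\emph{Step 2 (uniform-in-time weighted $L^\infty$ bound).} Along the solution, \eqref{con.id} gives $\int G(t)\,dv\equiv1$ and $\int|v|^2G(t)\,dv\equiv3$. One first propagates $L^1$ velocity moments of every order uniformly in time by Povzner-type (Bobylev) inequalities --- radial symmetry and the hard-sphere kernel are convenient here --- so that $\sup_{t\geq0}\int\langle v\rangle^kG(t)\,dv<\infty$ for all $k$. Conservation of mass and energy then yields $\nu_0\langle v\rangle\leq L[G](t,v)\leq C\langle v\rangle$ uniformly in $t$: the upper bound is immediate, and for the lower bound the conserved energy confines a fixed fraction of the mass to a fixed ball, so $L[G](t,v)\gtrsim\langle v\rangle$ for $|v|$ large, while the moment bounds handle bounded $v$. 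Inserting this into the mild formula and splitting $\{|v|\leq R\}$ --- where the regularizing part of $Q_{\rm gain}$ is controlled by the uniform $L^1$ moments --- from $\{|v|>R\}$ --- where the remaining part of $w_{l_1}Q_{\rm gain}$ carries a coefficient that vanishes as $R\to\infty$ and is absorbed into half of the damping $L[G]\gtrsim\langle v\rangle$ --- leads to a bound for $\sup_{0\leq s\leq t}\|w_{l_1}G(s)\|_\infty$ that closes uniformly in $t$. This gives $\sup_{t\geq0}\|w_{l_1}G(t)\|_\infty\leq C_0$; in particular the local solution is global.

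\emph{Step 3 (convergence to $\mu$).} By the $H$-theorem, $H(G(t)):=\int G(t)\log G(t)\,dv$ is nonincreasing and, by the moment bounds, bounded below, so $\int_0^\infty\mathcal{D}(G(t))\,dt<\infty$ for the nonnegative entropy dissipation $\mathcal{D}$; hence there is a sequence $t_n\to\infty$ with $\mathcal{D}(G(t_n))\to0$. The uniform $w_{l_1}L^1$ bound and the Dunford--Pettis criterion give $G(t_n)\rightharpoonup G_\infty$ weakly in $L^1$ along a subsequence, with $\int G_\infty\,dv=1$ and $\int|v|^2G_\infty\,dv=3$ by uniform integrability of $\langle v\rangle^2G(t_n)$; since $\mathcal{D}$ is weakly lower semicontinuous and vanishes only at Maxwellians, $G_\infty=\mu$. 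The uniform $w_{l_1}L^\infty$ bound upgrades this to $w_{l_0}L^\infty$-convergence for any fixed $l_0\in(6,l_1)$: the tails satisfy $w_{l_0}(v)|G(t,v)-\mu(v)|\lesssim\langle v\rangle^{l_0-l_1}$ uniformly in $t$, while on each ball $\{|v|\leq R\}$ one combines weak $L^1$ convergence with the uniform $L^\infty$ bound and equicontinuity in $v$ for $t$ large (the smoothing of $Q_{\rm gain}$ and the decay of the $e^{-\int_0^tL[G]}G_0$ term in the mild formula) to get uniform convergence. Finally, the monotone Lyapunov functional $H(G(t)\,|\,\mu)$, which is nonincreasing with $\frac{d}{dt}H(G(t)\,|\,\mu)=-\mathcal{D}(G(t))$, forces the full trajectory (not just subsequences) to converge, so $\|w_{l_0}[G(t)-\mu]\|_\infty\to0$ as $t\to\infty$.

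\emph{Main obstacle.} The delicate step is Step~2: the local theory and the qualitative part of Step~3 are routine once uniform moments are available, but closing the uniform-in-time \emph{weighted} $L^\infty$ estimate for data of arbitrary (not small) size requires both the uniform-in-time Povzner moment propagation and the careful balancing of the $\langle v\rangle$-growth of $Q_{\rm gain}$ against the $\langle v\rangle$-damping of the loss term in the Duhamel formula. This is precisely the mechanism at the heart of Carleman's analysis, which is why the statement is invoked from \cite{Car-1933} rather than reproved here.
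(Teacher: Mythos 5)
The paper contains no proof of this proposition: it is imported wholesale from Carleman \cite{Car-1933} (the cited pages \emph{are} the proof) and then used as a black box in Sections 3--5, so there is no internal argument to compare yours against. Assessed on its own, your sketch has the standard modern architecture --- mild formulation and iteration for local existence, uniform moment propagation plus a gain/loss balance for the global weighted $L^\infty$ bound, and the $H$-theorem plus compactness for convergence to $\mu$ --- which is closer in spirit to Arkeryd \cite{Ark-83} and Gamba--Panferov--Villani \cite{GPV-09} than to Carleman's original radially symmetric iteration, and every ingredient you name is a true statement about the hard-sphere homogeneous equation.

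The gap is that Step 2, which you yourself flag as the main obstacle, is described rather than proved, and it is not a routine absorption argument. Concretely: (a) the lower bound $RG\gtrsim\langle v\rangle$ for bounded $v$ does not follow from mass and energy conservation alone (the mass could concentrate near a sphere in velocity space); one needs a non-concentration input --- radial symmetry together with a uniformly propagated third moment, or the entropy bound --- and this must be arranged so as not to be circular with the $L^\infty$ bound being closed. (b) More seriously, the estimate $w_{l_1}Q_{\rm gain}(G,G)\lesssim\langle v\rangle\,\|w_{l_1}G\|_\infty^2$ is quadratic in the unknown sup norm while the damping $-RG\cdot w_{l_1}G$ is only linear, so for data of arbitrary size the ``absorb into half the damping'' step fails as stated; one must first replace one factor of $\|w_{l_1}G\|_\infty$ by a quantity controlled by low-order moments, which is exactly what the Carleman representation of the gain term and the splitting behind \eqref{op.es2} accomplish, and it is the actual content of the cited pages of \cite{Car-1933}. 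Step 3 has a smaller gap of the same kind: weak $L^1$ convergence plus a uniform $L^\infty$ bound does not yield uniform convergence on balls; the equicontinuity you invoke has to be extracted from the smoothing of $Q_{\rm gain}$ and proved. None of this casts doubt on the statement --- it is a classical theorem --- but as written your argument is an annotated plan whose hardest box is still empty, which is presumably why the authors cite the result rather than reprove it.
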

Recall that $Q_{\textrm{loss}}(F_1,F_2)$ can be rewritten as $F_2RF_1$
with
$$RF_1=\int_{\R^3\times \S^2}|(u-v)\cdot \omega|F_1(u)dud\omega.$$
The following lemma states that $RG$ enjoys a nice lower bound.
\begin{lemma}\cite[pp.99, pp.121]{Car-1933}Let $G(v)$ be a unique global solution constructed in Proposition \ref{Gex}, then there exists $\nu_0>0$ such that
\begin{equation}\label{Glbb}
RG\geq \nu_0\langle v\rangle.
\end{equation}
\end{lemma}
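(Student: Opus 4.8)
The plan is to integrate out the angular variable and reduce the claim to a time-uniform lower bound on $\int_{\R^3}|u-v|\,G(t,u)\,du$. Since $\int_{\S^2}|a\cdot\omega|\,d\omega=c_0|a|$ for every $a\in\R^3$ with a fixed constant $c_0>0$, one has $RG(v)=c_0\int_{\R^3}|u-v|\,G(t,u)\,du$, so it suffices to show that $\int_{\R^3}|u-v|\,G(t,u)\,du\geq\nu_0'\langle v\rangle$ for some $\nu_0'>0$ independent of $t\geq0$ and $v\in\R^3$. The two inputs I would use are: the conservation laws for \eqref{hbe} together with \eqref{con.id}, which yield $\int_{\R^3}G(t,u)\,du=1$ and $\int_{\R^3}|u|^2G(t,u)\,du=3$ for every $t\geq0$; and the time-uniform estimate $\sup_{t\geq0}\|G(t)\|_\infty\leq C_0$ contained in Proposition \ref{Gex} (namely \eqref{Ges}).

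First I would handle large velocities: from $|u-v|\geq|v|-|u|$, integrating against $G(t,\cdot)$ and using Cauchy--Schwarz with the mass and energy identities gives $\int_{\R^3}|u-v|\,G(t,u)\,du\geq|v|-\int_{\R^3}|u|\,G(t,u)\,du\geq|v|-\sqrt3$, which already produces the desired bound (with constant $\tfrac14$, say) once $|v|\geq2\sqrt3$.

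For the complementary bounded range of $v$ the point is that $G(t,\cdot)$ cannot concentrate near $v$, and this is exactly where the uniform sup-norm bound is used: $\int_{|u-v|\leq\delta}G(t,u)\,du\leq\tfrac43\pi C_0\delta^3$, so fixing $\delta_0>0$ with $\tfrac43\pi C_0\delta_0^3\leq\tfrac12$ forces $\int_{|u-v|>\delta_0}G(t,u)\,du\geq\tfrac12$, whence $\int_{\R^3}|u-v|\,G(t,u)\,du\geq\delta_0\int_{|u-v|>\delta_0}G(t,u)\,du\geq\tfrac{\delta_0}{2}$, uniformly in $t$ and in $v$. Combining the two regimes, and using $\langle v\rangle\leq1+|v|$ to compare both lower bounds with $\langle v\rangle$, gives $\int_{\R^3}|u-v|\,G(t,u)\,du\geq\nu_0'\langle v\rangle$ for a suitable $\nu_0'>0$; multiplying by $c_0$ yields \eqref{Glbb}.

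There is no genuine analytic obstacle here once Proposition \ref{Gex} is granted; the only point requiring care --- and the step I would flag as the crux --- is that \emph{every} constant must come from the time-uniform versions of the conservation identities and of the sup-norm bound. If $\|G(t)\|_\infty$ were allowed to grow, $G(t,\cdot)$ could in principle develop a sharp spike near some point as $t$ varies, destroying the lower bound precisely where $|v|$ is small; it is Carleman's a priori estimate \eqref{Ges} that excludes this.
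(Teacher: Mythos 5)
Your proof is correct, but there is nothing in the paper to compare it against: the lemma is imported verbatim from Carleman \cite{Car-1933} (the bracketed page references are the whole ``proof''), so you have in effect supplied the argument the authors chose to cite. Your two-regime argument is the standard one and is sound: after integrating out $\omega$ via $\int_{\S^2}|a\cdot\omega|\,d\omega=2\pi|a|$, the large-$|v|$ regime follows from the reverse triangle inequality and Cauchy--Schwarz against the conserved mass and energy, and the bounded-$|v|$ regime from the non-concentration bound $\int_{|u-v|\le\delta}G(t,u)\,du\le\tfrac43\pi C_0\delta^3$, which uses exactly the time-uniform estimate \eqref{Ges} (note $\|G(t)\|_\infty\le\|w_{l_1}G(t)\|_\infty\le C_0$ since $w_{l_1}\ge1$). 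You correctly identify the crux, namely that all constants must be uniform in $t$; this is indeed what the later application requires, e.g.\ the bound $\int_s^t e^{-\int_s^tRG(\tau)\,d\tau}\langle v\rangle\,ds\le\nu_0^{-1}$ used to derive the estimate preceding \eqref{X} in Section \ref{lochom}. The only input you invoke without comment is the propagation in time of the mass and energy identities from \eqref{con.id} to $G(t,\cdot)$, i.e.\ that Carleman's solutions of \eqref{hbe} with $\|w_{l_1}G(t)\|_\infty$ bounded, $l_1>6$, conserve $\int G\,dv$ and $\int|v|^2G\,dv$ exactly; this is part of the cited theory and is unproblematic, but since it is the only place where properties of the homogeneous flow beyond \eqref{Ges} enter, it would be worth a sentence.
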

Finally, we address a crucial weighed estimates on the bilinear Boltzmann operator $Q(\cdot,\cdot)$.
\begin{lemma}\label{op.es.lem}For $l>6$, it holds that
\begin{equation}\label{op.es}
|w_l Q(F_1,F_2)|\leq C\|w_l F_1\|_\infty\|w_l F_2\|_\infty\langle v\rangle.
\end{equation}
Furthermore, there exists a suitably small $\eps>0$ depending on $l$ such that
\begin{equation}\label{op.es2}
\begin{split}
|w_l Q_{\textrm{gain}}(F_1,F_2)|&+|w_l Q_{\textrm{loss}}(F_1,F_2)|+|w_l Q_{\textrm{gain}}(F_2,F_1)|\\
\leq&
 \|w_l F_1\|_\infty\left\{C\|w_{l+1} F_2\|_\infty+\eps\|w_lF_2\|_\infty\langle v\rangle\right\}.
\end{split}
\end{equation}
\end{lemma}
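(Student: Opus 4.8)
The plan is to split $Q=Q_{\textrm{gain}}-Q_{\textrm{loss}}$ and treat the two pieces separately; the loss part is elementary, and the gain part rests on a weight–transfer inequality coming from energy conservation, after which a change of variables reduces the $(u,\omega)$–integral to explicit one– and two–dimensional integrals. First I would dispose of the loss term. Writing $Q_{\textrm{loss}}(F_1,F_2)=F_2\,RF_1$ and using $|F_1(u)|\le\|w_lF_1\|_\infty\langle u\rangle^{-l}$, $\int_{\S^2}|(u-v)\cdot\omega|\,d\omega\lesssim|u-v|\lesssim\langle u\rangle+\langle v\rangle$, and $\int_{\R^3}(\langle u\rangle+\langle v\rangle)\langle u\rangle^{-l}\,du\lesssim\langle v\rangle$ (convergent since $l>6$), one gets
\[
|w_lQ_{\textrm{loss}}(F_1,F_2)(v)|\le w_l(v)|F_2(v)|\int_{\R^3\times\S^2}|(u-v)\cdot\omega|\,|F_1(u)|\,du\,d\omega\le C\|w_lF_1\|_\infty\,w_l(v)|F_2(v)|\,\langle v\rangle .
\]
Bounding $w_l(v)|F_2(v)|\le\|w_lF_2\|_\infty$ gives the loss contribution to \eqref{op.es}; bounding instead $w_l(v)|F_2(v)|\langle v\rangle=w_{l+1}(v)|F_2(v)|\le\|w_{l+1}F_2\|_\infty$ gives its contribution to \eqref{op.es2}.

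For the gain term the key observation is the elementary inequality
\[
\langle v\rangle^2=1+|v|^2\le 1+|u'|^2+|v'|^2\le(1+|u'|^2)(1+|v'|^2)=\langle u'\rangle^2\langle v'\rangle^2 ,
\]
valid because $|u'|^2+|v'|^2=|u|^2+|v|^2\ge|v|^2$; hence $w_l(v)\le w_l(u')w_l(v')$, so that $w_l(v)|F_1(u')F_2(v')|\le\|w_lF_1\|_\infty\|w_lF_2\|_\infty\,\langle v\rangle^l\langle u'\rangle^{-l}\langle v'\rangle^{-l}$. It then remains to bound $\int_{\R^3\times\S^2}|(u-v)\cdot\omega|\,\langle v\rangle^l\langle u'\rangle^{-l}\langle v'\rangle^{-l}\,du\,d\omega$ by $C\langle v\rangle$. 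For this I would, for each fixed $\omega$, write $u-v=s\omega+z$ with $s=(u-v)\cdot\omega\in\R$ and $z\perp\omega$, so that $v'=v+s\omega$, $u'=v+z$, $|(u-v)\cdot\omega|=|s|$ and $du=ds\,dz$; the integral then factorizes as
\[
\langle v\rangle^l\int_{\S^2}\Big(\int_{\omega^{\perp}}\langle v+z\rangle^{-l}\,dz\Big)\Big(\int_{\R}|s|\,\langle v+s\omega\rangle^{-l}\,ds\Big)\,d\omega ,
\]
and the two inner integrals are computed explicitly (they converge since $l>6$), after which the angular integral is estimated by splitting according to the angle between $\omega$ and $v$. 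The outcome is the desired linear–in–$\langle v\rangle$ bound, which completes \eqref{op.es} for the gain term. (Equivalently one may invoke the classical Carleman representation of the hard–sphere gain operator.)

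For \eqref{op.es2} one repeats the computation keeping the two weight factors separate and cutting the collision kernel, say $|(u-v)\cdot\omega|=|(u-v)\cdot\omega|\mathbf{1}_{\{|u-v|\le R\}}+|(u-v)\cdot\omega|\mathbf{1}_{\{|u-v|>R\}}$. On the truncated region the kernel is bounded by $R$, so the corresponding integral is finite and free of $\langle v\rangle$–growth, contributing $C_R\|w_lF_1\|_\infty\|w_lF_2\|_\infty\le C\|w_lF_1\|_\infty\|w_{l+1}F_2\|_\infty$; using the extra power $\langle v'\rangle^{-1}$ (resp.\ $\langle u'\rangle^{-1}$) supplied by $\|w_{l+1}F_2\|_\infty$ in the inner velocity integrals absorbs most of the complementary region into the same type of bound, and the only piece that still requires the $\langle v\rangle$–growing estimate $\|w_lF_2\|_\infty\langle v\rangle$ — which originates from the nearly head–on collisions, where $v'$ (resp.\ $u'$) is small while $|u-v|$ is large — then carries a coefficient $\varepsilon=\varepsilon(R)\to0$ as $R\to\infty$, because that portion of the $s$–integral lies outside a neighbourhood of size $\sim\langle v'\rangle$ of its peak. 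Choosing $R=R(l)$ large fixes a suitably small $\varepsilon$ and a constant $C$, which is \eqref{op.es2}.

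I expect the main obstacle to be the gain–term bound: obtaining $\int|(u-v)\cdot\omega|\,\langle v\rangle^l\langle u'\rangle^{-l}\langle v'\rangle^{-l}\,du\,d\omega\lesssim\langle v\rangle$ with only linear growth requires a careful treatment of the grazing part of the angular integral, and the bookkeeping in \eqref{op.es2} — isolating the genuinely $\langle v\rangle$–growing piece and showing its coefficient can be made small — is delicate precisely because the weight transfer $w_l(v)\le w_l(u')w_l(v')$ is lossy in that regime.
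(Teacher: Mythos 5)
The paper itself offers no proof of this lemma: it simply defers to (3.7)--(3.8) of Arkeryd--Esposito--Pulvirenti, so your reconstruction stands or falls on its own. Your treatment of the loss term and of \eqref{op.es} is sound and is essentially the standard argument behind the cited estimates: the pointwise bounds $|F_i|\leq \|w_lF_i\|_\infty\langle \cdot\rangle^{-l}$, the orthogonal decomposition $u-v=s\omega+z$ (giving $v'=v+s\omega$, $u'=v+z$, $du=ds\,dz$), and a splitting of the $\omega$-integral according to the angle between $\omega$ and $v$. Carried out, this does yield $\langle v\rangle^{l}\int |(u-v)\cdot\omega|\,\langle u'\rangle^{-l}\langle v'\rangle^{-l}\,du\,d\omega\leq C\langle v\rangle$; the linear growth is produced exactly by the two degenerate regimes you would find, namely $v'=O(1)$ (i.e.\ $\omega$ near $\pm v/|v|$ and $s\approx -v\cdot\omega$) and $u'=O(1)$ (i.e.\ $\omega$ nearly orthogonal to $v$ and $z\approx -v$).

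The gap is in \eqref{op.es2}: the mechanism you propose for the smallness of $\eps$ --- truncating the kernel on $\{|u-v|\leq R\}$ and letting $\eps=\eps(R)\to 0$ --- cannot work. The $\langle v\rangle$-growing part of $w_lQ_{\textrm{gain}}(F_1,F_2)$ comes precisely from the nearly head-on collisions you identify, and for those $|u-v|\geq |v-v'|=|s|\sim |v|$; hence for $|v|>R$ the dangerous set lies entirely inside $\{|u-v|>R\}$ and its coefficient is untouched by the truncation. Concretely, take $F_1(u)=\langle u\rangle^{-l}$ and $F_2=\mathbf{1}_{\{|v|\leq 1\}}$: for large $|v|$ the constraint $|v'|\leq 1$ confines $(\omega,s)$ to a set of measure $\sim |v|^{-2}$ on which $|s|\sim|v|$ and $\int_{\omega^\perp}\langle v+z\rangle^{-l}dz\sim (l-2)^{-1}\langle v\rangle^{2-l}$, whence $w_lQ_{\textrm{gain}}(F_1,F_2)(v)\geq c\,(l-2)^{-1}\langle v\rangle$, while $\|w_lF_2\|_\infty=2^{l/2}$ and $\|w_{l+1}F_2\|_\infty=2^{(l+1)/2}$ are fixed. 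Letting $|v|\to\infty$ shows that any admissible $\eps$ in \eqref{op.es2} must satisfy $\eps\geq c\,(l-2)^{-1}2^{-l/2}>0$: it cannot be made arbitrarily small for fixed $l$ by any choice of $C$ or of a truncation parameter. The correct split is in the size of $|v'|$ (resp.\ $|u'|$ for $Q_{\textrm{gain}}(F_2,F_1)$): on $\{|v'|\geq\rho\}$ the extra decay bought by $\|w_{l+1}F_2\|_\infty$ removes the growth at the price of a large constant, while on $\{|v'|\leq\rho\}$ one must pay $\eps\|w_lF_2\|_\infty\langle v\rangle$, with $\eps$ controlled by quantities such as $\int_0^{\infty}(1+r^2)^{(1-l)/2}r\,dr\sim (l-3)^{-1}$ and $\langle\rho\rangle^{-l}$. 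In other words the smallness of $\eps$ comes from taking the weight exponent $l$ large --- which is why the lemma says ``$\eps$ depending on $l$'' and why Theorem \ref{mth} asserts only that \emph{there exists} $l_0>6$, rather than allowing every $l_0>6$. Your bookkeeping for \eqref{op.es2} needs to be redone along these lines.
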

\begin{proof}
The proof for \eqref{op.es} and \eqref{op.es2} is the same as that of (3.7) and (3.8) in \cite[pp.397]{AEP-87}, respectively, we omit the details for brevity.
\end{proof}

\section{Local existence for the weakly inhomogeneous data}\label{lochom}
In this section, we will show that \eqref{be} admits a unique local-in-time solution around the spatially homogeneous solution $G(t,v)$
of \eqref{hbe}.

Denote $f=F-G$, then $f$ satisfies
\begin{eqnarray}\label{feq}
\left\{
\begin{array}{ll}
\begin{split}
&\pa_tf+v\cdot \na_xf+fRG=Q(f,G)+Q_{\textrm{gain}}(G,f)+Q(f,f),\ t>0, \ x\in\Omega,\ v\in\R^3,\\
&f|_{t=0}=f_0(x,v)=F_0-G_0,\ \ f(t,x,v)|_{_-}=f(t,x,R_xv),\ t\geq0, \ x\in\Omega,\ v\in\R^3,
\end{split}
\end{array}\right.
\end{eqnarray}
where the fact that $G(v)=G(|v|)=G(|R_xv|)$ was used.
To resolve the initial boundary value problem \eqref{feq}, it is necessary to introduce the following specular reflection cycles:
\begin{definition}
Let $\Omega$
be convex \eqref{scon}. Fix any
point $(t,x,v)\notin \gamma _{0}\cap \gamma _{-},$ and define $%
(t_{0},x_{0},v_{0})=(t,x,v)$, and for $k\geq 1$
\begin{equation}
(t_{k+1},x_{k+1},v_{k+1})=(t_{k}-t_{\mathbf{b}}
(t_{k},x_{k},v_{k}),x_{%
\mathbf{b}}(x_{k},v_{k}),R_{x_{k+1}}v_{k}),  \label{specularcycle}
\end{equation}%
where $R_{x_{k+1}}v_{k}=v_{k}-2(v_{k}\cdot n(x_{k+1}))n(x_{k+1}).$ And we
define the specular back-time cycle
\begin{equation*}
X_{\mathbf{cl}}^{{}}( s )\equiv \sum_{k=1}\mathbf{1}%
_{[t_{k+1},t_{k})}( s )\left\{ x_{k}+v_{k}( s -t_{k})\right\} ,\text{ \ \ }V_{%
\mathbf{cl}}^{{}}( s )\equiv \sum_{k=1}\mathbf{1}%
_{[t_{k+1},t_{k})}( s )v_{k}.  \label{backtimecycle}
\end{equation*}
\end{definition}

Our main result in this section is the following:
\begin{proposition}\label{loc.ex}
Given $t>0$, there exists constant $C_1>0$
such that if
\begin{equation}\label{aps}
\|w_lf_0\|_\infty\leq (16C_1)^{-1}e^{-(2C_1\ln 4)t},
\end{equation}
for $l>6$,
 then there exists a unique solution up to time $t>0$
of \eqref{feq} satisfying
\begin{equation}\label{tbd}
\|w_lf(t)\|_\infty\leq \|w_lf_0\|_\infty e^{(2C_1\ln 4)t}.
\end{equation}
Moreover, if $f_{0}(x,v)$ is continuous away from the set $\gamma _{0}$,
then $f(s,x,v)$ is continuous in $[0,t ]\times \{\bar{\Omega}\times
\R^{3}\setminus \gamma _{0}\}.$

\end{proposition}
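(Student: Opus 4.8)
The plan is to solve \eqref{feq} by a fixed-point iteration in the weighted space $C([0,t];L^\infty_{w_l})$, reducing the nonlinear problem to a sequence of linear transport problems with scalar absorption and specular reflection. I would set $f^{0}\equiv0$ and, for $n\ge0$, define $f^{n+1}$ as the solution of
\begin{equation*}
\pa_s f^{n+1}+v\cdot\na_x f^{n+1}+f^{n+1}RG=Q(f^{n},G)+Q_{\textrm{gain}}(G,f^{n})+Q(f^{n},f^{n}),\qquad 0<s\le t,
\end{equation*}
with $f^{n+1}|_{s=0}=f_0$ and $f^{n+1}|_{\gamma_{-}}=f^{n+1}(s,x,R_xv)$. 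Since the backward characteristics of \eqref{feq} are the purely geometric trajectories $[X_{\mathbf{cl}}(\tau),V_{\mathbf{cl}}(\tau)]$ of the free flow with specular bounces --- independent of the unknown --- each linear problem is solved by the Duhamel formula along the specular back-time cycle: with $\mathcal{S}[g]:=Q(g,G)+Q_{\textrm{gain}}(G,g)+Q(g,g)$,
\begin{equation*}
f^{n+1}(s,x,v)=e^{-\int_0^s RG(\sigma,|v|)\,d\sigma}\,f_0\big(X_{\mathbf{cl}}(0),V_{\mathbf{cl}}(0)\big)+\int_0^s e^{-\int_\tau^s RG(\sigma,|v|)\,d\sigma}\,\mathcal{S}[f^{n}]\big(\tau,X_{\mathbf{cl}}(\tau),V_{\mathbf{cl}}(\tau)\big)\,d\tau,
\end{equation*}
where I have used that the specular cycle conserves speed, $|V_{\mathbf{cl}}(\tau)|\equiv|v|$, so that (recall $G$ is radial) $RG(\sigma,V_{\mathbf{cl}}(\sigma))=RG(\sigma,|v|)$ depends only on $|v|$. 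The legitimacy of this representation in $L^\infty$ --- that the cycle is well defined with finitely many bounces outside a negligible grazing/accumulation set, and depends continuously on $(s,x,v)$ away from $\gamma_0$ --- is precisely the specular-reflection part of the $L^2$--$L^\infty$ theory in strictly convex analytic (or rotationally symmetric) domains, which I would invoke from \cite{Guo-2010}. Two facts about the exponent will be used constantly: $\int_\tau^s RG(\sigma,|v|)\,d\sigma\ge\nu_0\langle v\rangle(s-\tau)$ by \eqref{Glbb}, and $\int_0^s e^{-\nu_0\langle v\rangle(s-\tau)}\langle v\rangle\,d\tau\le\nu_0^{-1}$.

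For the a priori bound set $M_n(s):=\sup_{0\le\tau\le s}\|w_lf^{n}(\tau)\|_\infty$. Multiplying the Duhamel formula by $w_l$, using $\langle V_{\mathbf{cl}}(\tau)\rangle\equiv\langle v\rangle$, and estimating the source by Lemma \ref{op.es.lem}, I would apply \eqref{op.es2} with $F_1=f^{n}$, $F_2=G$ to the \emph{whole} linear part $Q(f^{n},G)+Q_{\textrm{gain}}(G,f^{n})=Q_{\textrm{gain}}(f^{n},G)-Q_{\textrm{loss}}(f^{n},G)+Q_{\textrm{gain}}(G,f^{n})$, which --- since $\sup_\sigma\|w_{l+1}G(\sigma)\|_\infty,\sup_\sigma\|w_lG(\sigma)\|_\infty\le C_0$ by Proposition \ref{Gex} (for $l$ with $l+1\le l_1$) --- is $\le\|w_lf^{n}(\tau)\|_\infty\{CC_0+\eps C_0\langle v\rangle\}$, so the velocity-growing part carries the small factor $\eps$; the quadratic part I would bound by $C\|w_lf^{n}(\tau)\|_\infty^2\langle v\rangle$ via \eqref{op.es}. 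Inserting these, and using $e^{-\int_\tau^s RG}\le e^{-\nu_0\langle v\rangle(s-\tau)}$ to absorb every factor of $\langle v\rangle$, I would obtain
\begin{equation*}
M_{n+1}(s)\le\|w_lf_0\|_\infty+C_1\int_0^s M_n(\tau)\,d\tau+\frac{\eps C_0}{\nu_0}\,M_n(s)+\frac{C}{\nu_0}\,M_n(s)^2,\qquad 0\le s\le t .
\end{equation*}
Fixing $\eps$ in \eqref{op.es2} small enough (depending on $C_0,\nu_0$) that $\eps C_0/\nu_0\le\tfrac14$, and noting that as long as the iterates stay in the regime $M_n(s)\le\nu_0/(4C)$ the quadratic term is $\le\tfrac14 M_n(s)$, this reduces to $M_{n+1}(s)\le\|w_lf_0\|_\infty+C_1\int_0^s M_n(\tau)\,d\tau+\tfrac12 M_n(s)$, from which a routine Gronwall argument --- run, if one wants the precise constants of \eqref{tbd}, over short time-steps of length $\sim C_1^{-1}$ on each of which the weighted norm at most quadruples --- gives a uniform bound of the exponential form \eqref{tbd} for every $M_n$, which passes to the limit. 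The role of \eqref{aps} is exactly to keep this bound $\le(16C_1)^{-1}$, hence inside the good regime, on all of $[0,t]$, so the iteration never escapes it and the quadratic term stays subordinate.

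Convergence and uniqueness I would get from the same estimate applied to $g^{n+1}:=f^{n+1}-f^{n}$, which solves the linear problem with zero data and source equal to the linear operator acting on $g^{n}$ plus the cross terms $Q(g^{n},f^{n})+Q(f^{n-1},g^{n})$; in the good regime the latter contribute a coefficient $\lesssim M_n(s)\le\nu_0/(4C)$, so with a large enough Gronwall exponent one gets a strict contraction $\sup_{[0,t]}\|w_lg^{n+1}\|_\infty\le\theta\sup_{[0,t]}\|w_lg^{n}\|_\infty$ with $\theta<1$. Then $\{f^{n}\}$ is Cauchy in $C([0,t];L^\infty_{w_l})$, its limit $f$ solves \eqref{feq} and satisfies \eqref{tbd}, and any two solutions obeying \eqref{tbd} coincide by the same contraction. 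For continuity, if $f_0$ is continuous off $\gamma_0$ then --- since in a strictly convex analytic domain the specular flow maps $\bar{\Omega}\times\R^3\setminus\gamma_0$ continuously into itself away from the grazing set \cite{Guo-2010} --- each $f^{n}$, and hence $f$ by the uniform convergence, is continuous on $[0,t]\times\{\bar{\Omega}\times\R^3\setminus\gamma_{0}\}$.

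The main obstacle is not the collision-operator estimates (Lemma \ref{op.es.lem}, following \cite{AEP-87}) nor the Gronwall/contraction scheme, both essentially routine; it is the control of the specular characteristics in a general strictly convex domain --- the well-definedness of the back-time cycle with finitely many bounces off a negligible set, the harmlessness of the grazing set, and the propagation of continuity away from $\gamma_0$. This is exactly what forces the analyticity (or rotational-symmetry) hypothesis on $\xi$, and where I would lean wholesale on \cite{Guo-2010}.
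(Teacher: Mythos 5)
Your proposal is correct and follows essentially the same route as the paper: Duhamel along the specular back-time cycle with the damping $e^{-\int RG}$, the weighted bounds of Lemma \ref{op.es.lem} combined with $RG\geq\nu_0\langle v\rangle$ to absorb the $\langle v\rangle$ growth, a stepping argument on intervals of length $\sim C_1^{-1}$ on which the norm at most quadruples (yielding \eqref{tbd} and explaining the role of \eqref{aps}), a contraction for the differences $f^{n+1}-f^{n}$, and reliance on \cite{Guo-2010} for the finiteness of bounces and the propagation of continuity away from $\gamma_0$. The only (cosmetic) difference is that you run the uniform bound directly on the iterates rather than first deriving the a priori estimate for an assumed solution and then setting up the iteration separately, as the paper does.
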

\begin{proof}
Let us first deduce the a priori estimate \eqref{tbd} under the assumption \eqref{aps}. Suppose $f(t,x,v)$ is a solution to the initial boundary value problem \eqref{feq} on the
time interval $[0,t]$. Take $(t,x,v)\notin [0,\infty)\times(\ga_0\cup\ga_-)$ and recall the definition $t_k$ in \eqref{specularcycle}, to see that there exists a finite integer $m>0$ such that $t_{m+1}\leq 0<t_m$, cf. \cite[pp.768]{Guo-2010}. From \eqref{feq}, one thus has for $(t,x,v)\notin [0,\infty)\times(\ga_0\cup\ga_-)$
%for $(x,v)\in \overline{\Omega}\times\R^3\backslash\ga_0$
\begin{equation}\label{f.ep}
\begin{split}
f(t,x,v)=&e^{-\int_0^tRG (s)ds}f_0+\sum\limits_{k=0}^{m-1}\int_{t_{k+1}}^{t_k}e^{-\int_{s}^{t}RG (\tau)d\tau}\phi_k(s)ds\\
&+\int_{0}^{t_m}e^{-\int_{s}^{t}RG (\tau)d\tau}\phi_m(s)ds,
\end{split}
\end{equation}
here $\phi_k(s)$ is defined as $\phi_k(s)=\left\{Q(f,G)+Q_{\textrm{gain}}(G,f)+Q(f,f)\right\}(s,x_k+(s-t_{k})v_k,v_k).$ On the one hand,
\eqref{f.ep}
implies
\begin{equation}\label{f.ep1}
\begin{split}
|w_lf|\leq &e^{-\int_0^tRG (s)ds}|w_lf_0|+\sum\limits_{k=0}^{m-1}\int_{t_{k+1}}^{t_k}e^{-\int_{s}^{t}RG (\tau)d\tau}|w_l\phi_k(s)|ds\\
&+\int_{0}^{t_m}e^{-\int_{s}^{t}RG (\tau)d\tau}|w_l\phi_m(s)|ds.
\end{split}
\end{equation}
On the other hand, Lemma \ref{op.es.lem} and \eqref{Ges} in Proposition \ref{Gex} give rise to
\begin{equation}\label{phibd}
\begin{split}
|w_l\phi_k(s)|\leq C_l\|w_lf(s)\|_\infty+\eps\|w_lf(s)\|_\infty\langle v\rangle+C_l\|w_lf(s)\|^2_\infty\langle v\rangle,
\end{split}
\end{equation}
for all $0\leq k\leq m.$
Plugging \eqref{f.ep1} and \eqref{phibd} into \eqref{f.ep} and applying \eqref{Glbb} % and $f(t,x,v)|_{_-}=f(t,x,R_xv)$
 we arrive at
\begin{equation*}\label{fes}
\begin{split}
\|w_lf(t)\|_{\infty}\leq \|w_lf_0\|_{\infty}+C_lt\sup\limits_{0\leq s\leq t}\|w_lf(s)\|_\infty+\frac{\eps}{\nu_0}\sup\limits_{0\leq s\leq t}\|w_lf(s)\|_\infty+\frac{C_l}{\nu_0}\sup\limits_{0\leq s\leq t}\|w_lf(s)\|^2_\infty.
\end{split}
\end{equation*}
Next, taking $C_1=\max\{C_l/\nu_0,C_l+1\}$, we further have by letting $\frac{\eps}{\nu_0}\leq t_\ast$
\begin{equation}\label{X}
\begin{split}
\|w_lf_0\|_{\infty}-(1-t_\ast C_1)\sup\limits_{0\leq s\leq t_\ast}\|w_lf(s)\|_{\infty}+C_1\sup\limits_{0\leq s\leq t_\ast}\|w_lf(s)\|^2_{\infty}\geq 0.
\end{split}
\end{equation}
Hence, for $t_\ast=1/(2C_1)$, we get from \eqref{X} that
\begin{equation}\label{X1}
\begin{split}
\sup\limits_{0\leq s\leq t_\ast}\|w_lf(s)\|_{\infty}\leq 4\|w_lf_0\|_{\infty},
\end{split}
\end{equation}
provided that $\|w_lf_0\|_{\infty}\leq 1/(16C_1)$. Since $[\frac{t}{t_\ast}]t_\ast\leq t\leq ([\frac{t}{t_\ast}]+1)t_\ast$ ($[x]$ denotes the integer part of $x$), using \eqref{aps} and iterating \eqref{X1} $[\frac{t}{t_\ast}]$ times lead us to
\begin{equation*}
\|w_lf(t)\|_\infty\leq \|w_lf_0\|_\infty e^{[\frac{t}{t_\ast}]\ln 4}\leq \|w_lf_0\|_\infty e^{(2C_1\ln 4)t}.
\end{equation*}
We are now in a position to prove the existence of unique solution of \eqref{feq}, to do this, we first design the following iteration approximation sequence %$\{f^n\}_{n=0}^\infty$ as follows
\begin{eqnarray*}\label{it1}
\left\{
\begin{array}{ll}
\begin{split}
&\pa_tf^{n+1}+v\cdot \na_xf^{n+1}+f^{n+1}RG\\&\qquad\qquad=Q(f^{n},G)+Q_{\textrm{gain}}(G,f^{n})+Q(f^{n},f^{n}),\ t>0, \ x\in\Omega,\ v\in\R^3,\\
&f^{n+1}|_{t=0}=f_0(x,v)=F_0-G_0,\ \ f^{n+1}(t,x,v)|_{-}=f^{n+1}(t,x,R_xv),\ t\geq0, \ x\in\Omega,\ v\in\R^3,
\end{split}
\end{array}\right.
\end{eqnarray*}
starting with $f^0(t,x,v)=f_0(x,v)$. It is quite routine to show that $\{f^n\}_{n=0}^\infty$ is well-defined and enjoys the upper bound
\begin{equation}\label{fnbd}
\sup\limits_{n}\sup_{0\leq t\leq \tau_\ast}\|w_lf^n(t)\|_\infty\leq C\|w_lf_0\|_\infty,
\end{equation}
for some small $\tau_\ast>0$.
In what follows, we will verify that such a sequence is convergent in the weighted $L^\infty$ space for a small $\tau_\ast>0$.
For convenience, we denote $\phi^n$ by $Q(f^{n},G)+Q_{\textrm{gain}}(G,f^{n})+Q(f^{n},f^{n})$, and we also use the similar notation $\phi_k^n$ as $\phi_k$. Let $g^{n+1}=f^{n+1}-f^{n}$, then one sees that $g^{n+1}$ satisfies
\begin{eqnarray*}\label{it2}
\left\{
\begin{array}{ll}
\begin{split}
&\pa_tg^{n+1}+v\cdot \na_xg^{n+1}+g^{n+1}RG=\phi^n-\phi^{n-1},\ t>0, \ x\in\Omega,\ v\in\R^3,\\
&g^{n+1}|_{t=0}=0,\ \ g^{n+1}(t,x,v)|_{-}=g^{n+1}(t,x,R_xv),\ t\geq0, \ x\in\Omega,\ v\in\R^3,
\end{split}
\end{array}\right.
\end{eqnarray*}
from which, it follows for $(t,x,v)\notin [0,\infty)\times(\ga_0\cup\ga_-)$
\begin{equation}\label{gn.ep}
\begin{split}
g^{n+1}(t,x,v)=&\sum\limits_{k=0}^{m-1}\int_{t_{k+1}}^{t_k}e^{-\int_{s}^{t}RG (\tau)d\tau}[\phi^n_k-\phi^{n-1}_k](s)ds
\\&+\int_{0}^{t_m}e^{-\int_{s}^{t}RG (\tau)d\tau}[\phi^n_m-\phi^{n-1}_m](s)ds.
\end{split}
\end{equation}
Notice that
\begin{equation}\label{gnnop}
\begin{split}
[\phi^n_k-\phi^{n-1}_k]=Q(g_k^{n},G_k)+Q_{\textrm{gain}}(G_k,g_k^{n})+Q(g_k^{n},f_k^{n})+Q(f_k^{n-1},g_k^{n}),
\end{split}
\end{equation}
where $G_k=G(v_k)$. Letting $\|w_lf_0\|_\infty$ be sufficiently small, for a small $\tau_\ast>0$, we get from Lemma \ref{op.es.lem}, \eqref{fnbd}, \eqref{gn.ep} and \eqref{gnnop} that
\begin{equation}\label{gnct}
\begin{split}
\sup\limits_{0\leq t\leq \tau_\ast}\|w_lg^{n+1}(t)\|_\infty\leq c(\tau_\ast,\|w_lf_0\|_\infty,\eps_0)\sup\limits_{0\leq t\leq \tau_\ast}\|w_lg^{n}(t)\|_\infty,
\end{split}
\end{equation}
where $0<c(\tau_\ast,\|w_lf_0\|_\infty,\eps_0)<1.$ Therefore the iteration approximation sequence $\{f^n\}_{n=0}^\infty$ converges in a small time interval $[0,\tau_\ast]$ provided both $\|w_lf_0\|_\infty$ and $\eps_0$ are small enough. In fact the procedure for obtaining \eqref{gnct} can be iterated to get the convergence up to time $t$ due to \eqref{aps} and \eqref{tbd}. The uniqueness and positivity of the solution follows trivially. In addition, since we have $L^\infty$ convergence, as \cite[pp.804]{Guo-2010}, one can further deduce that $f$ is continuous away from $\ga_0$ when $\Omega$ is strictly convex, this ends up the proof of Proposition \ref{loc.ex}.

\end{proof}

\section{Exponential decay for the small data around a global Maxwellian}\label{smsol}
%In the context of perturbative study around the global Maxwellian for the Botlzmann equation with
For the Boltzmann equation with boundary condition, if the initial data is assumed to be a small perturbation around a global Maxwellian, as
mentioned in the introduction, the existence, uniqueness and regularity as well as their time decay toward the absolute Maxwllian have been
intensively studied, see \cite{Guo-2010,BG-2015,LY-2016}. Generally speaking, there are two basic kinds of perturbative regime around a global equilibrium $\mu$:
$F=\mu+h$ and $F=\mu+\sqrt{\mu}h$. One of the advantages of the former decomposition is that it allows much more weaker velocity
weight than the latter one when one aims to obtain the global existence and long time behaviors of the solutions with the aid of the weighted $L^\infty$ approach, cf. \cite{Guo-2010, BG-2015}.

Using the splitting $F=\mu+h$, we rewrite \eqref{be} as
\begin{eqnarray}\label{h.eq}
\left\{\begin{array}{lll}
\begin{split}
&\pa_th+v\cdot\na_x h+Lh=Q(h,h),\ t>0, \ x\in\Omega,\ v\in\R^3,\\
&h(0,x,v)=h_0(x,v)=F_0-\mu,\ x\in\Omega,\ v\in\R^3,\\
&h(t,x,v)|_{n(x)\cdot v<0}=h(t,x,R_xv),\ t\geq0,\ x\in\Omega,\ v\in\R^3,
\end{split}
\end{array}\right.
\end{eqnarray}
here the linear operator $L$ is defined as
\begin{equation*}\label{L.def}
Lh=-\left\{Q(\mu,h)+Q(h,\mu)\right\}=\nu h-Kh,
\end{equation*}
with $\nu=Q_{\textrm{loss}}(\mu,1)\sim \langle v\rangle$ and $Kh=Q(h,\mu)+Q_{\textrm{gain}}(\mu,h)$.

Our main results in this section is the following:
\begin{proposition}\label{h.exist}
Assume that $\xi $ is both strictly convex \eqref{scon} and analytic,
and the mass and energy \eqref{con.id} are conserved for $h_{0}+\mu$
. In the case of $\Omega $ has any rotational symmetry \eqref{axis}, we
further require the corresponding angular momentum \eqref{axiscon} is conserved
for $h_{0}+\mu$.
There exists $\de_0>0$, such that if $\|w_lh_0\|_{\infty}\leq\de_0$ with $l>6$, then there exists a unique global solution
$h(t,x,v)$ to the initial boundary value problem \eqref{h.eq}. Moreover, there exists $\la>0$ such that
\begin{equation}\label{hdecay}
\|w_lh(t)\|_{\infty}\leq Ce^{-\la t}\|w_lh_0\|_{\infty}.
\end{equation}
In addition, if $h_{0}(x,v)$ is continuous away from the set $\gamma _{0}$, %and
%\begin{equation*}
%h_{0}(x,v)=h_{0}(x,R_xv)\text{ on }\partial \Omega,
%\end{equation*}%
then $h(t,x,v)$ is continuous in $[0,\infty )\times \{\bar{\Omega}\times
\R^{3}\setminus \gamma _{0}\}.$

\end{proposition}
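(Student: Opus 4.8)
The plan is to adapt the $L^2$–$L^\infty$ framework of \cite{Guo-2010} for the specular reflection boundary condition with mass/energy (and angular-momentum) conservation, which is exactly the setting in which the global decay of small perturbations around $\mu$ was established. First I would treat the \emph{linearized} problem $\pa_t h+v\cdot\na_x h+Lh=g$ with specular reflection and run the standard $L^2$ coercivity estimate: using the conservation laws \eqref{con.id}, \eqref{axiscon}, the null space of $L$ (spanned by $\sqrt\mu,v\sqrt\mu,|v|^2\sqrt\mu$) is killed by the projection onto the hydrodynamic part, so one obtains exponential decay in $L^2$ by the usual argument — a macroscopic estimate combined with the spectral gap $\langle Lh,h\rangle\gtrsim\|(\mathbf I-\mathbf P)h\|_\nu^2$, plus the crucial fact from \cite{Guo-2010} that the specular reflection is well-posed on a strictly convex analytic domain with rotational symmetry (this is where analyticity of $\xi$ enters, to control the grazing set $\gamma_0$ and the characteristic trajectories). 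I would cite \cite{Guo-2010} for the linear $L^2$ decay rather than reprove it.

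Next I would bootstrap from $L^2$ to weighted $L^\infty$. The idea is to write $h$ along the specular backward cycles exactly as in \eqref{f.ep}–\eqref{f.ep1} (with $RG$ replaced by the collision frequency $\nu(v)\sim\langle v\rangle$), decompose $Kh=K_w h$ after conjugating by $w_l$, and split $K_w$ into a small-velocity-truncated smoothing part plus a part with small operator norm. Iterating the Duhamel representation twice produces a double integral in which the kernel of $K_w$ is integrable and gains a factor from the small time/velocity cutoff; the remaining $L^1_{t,x,v}$ integral of $K_w h$ over a bounded velocity region is then controlled by the $L^2$ norm via Cauchy–Schwarz, closing the estimate $\|w_l h(t)\|_\infty\lesssim e^{-\la t}\|w_l h_0\|_\infty + \sup_{s\le t}\|w_l h(s)\|_\infty^2$. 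For the nonlinear term $Q(h,h)$ I would use \eqref{op.es} and \eqref{op.es2} of Lemma \ref{op.es.lem} (with $\mu$ playing no special role — these are the same hard-sphere weighted estimates), which give $|w_l Q(h,h)|\lesssim \|w_l h\|_\infty\|w_{l+1}h\|_\infty + \eps\|w_l h\|_\infty^2\langle v\rangle$; since $l>6$, a standard interpolation/absorption handles the loss of one power of $\langle v\rangle$. Local existence and uniqueness for \eqref{h.eq} follows by the same iteration scheme as in Proposition \ref{loc.ex}, and the continuity away from $\gamma_0$ is inherited from the $L^\infty$ convergence of the iterates together with the strict convexity of $\Omega$, again as in \cite[pp.804]{Guo-2010}.

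Finally I would close the global existence by the usual continuity argument: assuming $\|w_l h_0\|_\infty\le\de_0$ small, the local solution exists on $[0,T_0]$ with $\|w_l h(t)\|_\infty\le 2C\|w_l h_0\|_\infty$; feeding this bound into the $L^2$–$L^\infty$ decay estimate above and choosing $\de_0$ small enough that the quadratic term is absorbed, one upgrades to $\|w_l h(t)\|_\infty\le Ce^{-\la t}\|w_l h_0\|_\infty$ on $[0,T_0]$, which in particular is $\le\de_0$ again at $t=T_0$; iterating extends the solution to $[0,\infty)$ with the uniform exponential decay \eqref{hdecay}.

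I expect the main obstacle to be the $L^\infty$ estimate on the specular backward cycles: near the grazing set $\gamma_0$ the number of bounces can blow up and the trajectories are delicate, so one must invoke the geometric lemmas of \cite{Guo-2010} (valid precisely because $\xi$ is strictly convex and analytic, and $\Omega$ has rotational symmetry) to guarantee that the cycle representation \eqref{f.ep} is legitimate, that the travel times between bounces are bounded below on a set of almost-full measure, and that the contribution of trajectories spending too long near $\gamma_0$ is negligible. Everything else — the $L^2$ spectral gap, the kernel estimates for $K_w$, the nonlinear estimates, and the bootstrap — is by now routine once that geometric input is in place, so the write-up will mostly consist of pointing to \cite{Guo-2010} and to Lemma \ref{op.es.lem} and assembling the pieces.
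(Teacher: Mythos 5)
Your overall architecture (Duhamel along the specular cycles, splitting $K$ into a velocity-truncated smoothing part plus a small remainder, bootstrapping $L^\infty$ from $L^2$) is the right family of ideas, but there is a genuine gap at the very first step, the linear $L^2$ decay. The spectral gap and the macroscopic estimates of \cite{Guo-2010} are formulated for the symmetrized perturbation $F=\mu+\sqrt{\mu}f$, i.e.\ they require $f=\mu^{-1/2}h\in L^2$ and concern the operator $\widetilde{L}$, whose null space is the one you quote, spanned by $\{1,v,|v|^2\}\sqrt{\mu}$. In the present proposition the perturbation is only controlled in the polynomially weighted space $\|w_lh\|_\infty<\infty$ with $w_l=\langle v\rangle^l$, so $\mu^{-1/2}h$ is in general not square integrable, and the unsymmetrized operator $L=\nu-K$ acting on $h$ has no coercivity in the plain $L^2$ inner product. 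Hence the ``usual argument'' for exponential $L^2$ decay has nothing to act on, and the subsequent $L^\infty$ bootstrap (whose right-hand side is fed by that $L^2$ decay) cannot close. This is exactly the obstruction that the weak polynomial weight creates, and it is why the paper does not run your route.

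What the paper actually does is replace your $L^2$ step by the Caflisch/Arkeryd--Esposito--Pulvirenti decomposition $L=\nu-\chi_N^cK-\chi_NK$ together with the splitting $h=h_1+\sqrt{\mu}h_2$. The component $h_1$ solves \eqref{h1}, a transport equation damped by $\nu$ in which $\chi_N^cK$ has small norm relative to $\nu$ for $N$ large; exponential decay of $\|w_lh_1\|_\infty$ then follows from a direct Duhamel and iteration argument along the specular cycles, with no spectral theory at all (Lemma \ref{h1s}). The component $h_2$ solves \eqref{h2}, the standard symmetrized linear equation with source $\mu^{-1/2}\chi_NKh_1$, which is supported in $|v|<N$ and hence harmless after division by $\sqrt{\mu}$; to it the linear semigroup decay of \cite[Theorem 8]{Guo-2010} (Proposition \ref{specularbd}) applies as a black box, after removing the hydrodynamic part via the identity $P(h_2+\mu^{-1/2}h_1)=0$ (Lemma \ref{h2s}). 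If you wish to keep your direct route you must either strengthen the hypothesis to a Gaussian-type weight, which changes the statement, or insert this decomposition; as written, the $L^2$ coercivity step fails and the rest of the proof cannot be assembled.
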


To prove Proposition \ref{h.exist}, the key point is to find a decomposition of the perturbed Boltzmann linear operator $L$
into
\begin{equation*}\label{L.dec}
L=\nu-\chi^c_N K-\chi_NK,
\end{equation*}
where for $N>0$, $\chi_N=\left\{\begin{array}{ll}
&1,\ \ \textrm{if}\ \ |v|< N,\\
&0,\ \ \textrm{ortherwise}
\end{array}\right.$, and $\chi^c_N=1-\chi_N$. It is important to point out that $\chi^c_N K$ is small compared to $\nu$ and $\chi_NK$ has a smooth effect.
This idea comes from Caflisch \cite{Ca-1980} and Arkeryd-Esposito-Pulvirenti \cite{AEP-87}, for its recent application in the study of the Boltzmann equation in weighted $L_v^1L_x^\infty$ space, we refer to \cite{BG-2015, BD-2016}.
More precisely, we decompose $h=h_1+\sqrt{\mu}h_2$, then we shall construct $(h_1,h_2)$ solution to the following initial boundary value problem
\begin{eqnarray}\label{h1}
\left\{\begin{array}{ll}
&\pa_t h_1+v\cdot\na_xh_1+\nu h_1=\chi_N^c Kh_1+Q(h_1+\sqrt{\mu}h_2,h_1+\sqrt{\mu}h_2),\\[2mm]
&h_1(0,x,v)=h_0(x,v),\ \ h_1(t,x,v)|_-=h_1(t,x,R_xv),
\end{array}\right.
\end{eqnarray}
\begin{eqnarray}\label{h2}
\left\{\begin{array}{ll}
&\pa_t h_2+v\cdot\na_xh_2+\widetilde{L} h_2=\mu^{-1/2}\chi_N Kh_1,\\[2mm]
&h_2(0,x,v)=0,\ \ h_2(t,x,v)|_-=h_2(t,x,R_xv),
\end{array}\right.
\end{eqnarray}
here $\widetilde{L} h_2=-\mu^{-1/2}\left\{Q(\mu,\sqrt{\mu}h_2)+Q(\sqrt{\mu}h_2,\mu)\right\}.$ It is seen that $h_1+\sqrt{\mu}h_2$ is a solution to the initial boundary value problem \eqref{h.eq}.

It is convenient to consider the following problem before solving the system \eqref{h2}
\begin{eqnarray}\label{g}
\left\{\begin{array}{ll}
&\pa_t f+v\cdot\na_xf+\widetilde{L} f=0,\\[2mm]
&f(0,x,v)=f_0(x,v),\ \ f(t,x,v)|_-=f(t,x,R_xv).
\end{array}\right.
\end{eqnarray}
As a matter of fact, \eqref{g} can be treated by means of an $L^2-L^\infty$ argument, to do this, a weighted $L^\infty$ estimate should be established.
Let $l>6$, we denote
$\widetilde{f}=w_lf$, and study the equivalent linearized Boltzmann equation
\begin{eqnarray}\label{wg}
\left\{\begin{array}{ll}
&\pa_t \widetilde{f}+v\cdot\na_x\widetilde{f}+\widetilde{L}_w \widetilde{f}=0,\\[2mm]
&\widetilde{f}(0,x,v)=\widetilde{f}_0(x,v)=w_lf_0(x,v),\ \ \widetilde{f}(t,x,v)|_-=\widetilde{f}(t,x,R_xv),
\end{array}\right.
\end{eqnarray}
where $\widetilde{L}_w(\cdot)=w_l\widetilde{L}(\frac{\cdot}{w_l}).$

The solution $\widetilde{f}(t,x,v)$ to \eqref{wg} can be expressed through semigroup $U(t)$ as
\begin{equation*}
\widetilde{f}(t,x,v)=\{U(t)\widetilde{f}_0\}(x,v),
\end{equation*}
with the initial boundary data given by
$$
\{U(0)\widetilde{f}_0\}(x,v)=\widetilde{f}_0(x,v), \ \textrm{and} \ \{U(0)\widetilde{f}_0\}(x,v)|_{\ga_-}=\widetilde{f}_0(x,R_{x}v).
$$
As it is shown in \cite[pp.731]{Guo-2010}, the following conversation of mass, energy and angular momentum play a significant role in obtaining the time decay rate of \eqref{g}:
\begin{equation}\label{mass.c}
\int_{\Om\times\R^3}f(t,x,v)\sqrt{\mu(v)}dxdv=0,
\end{equation}
\begin{equation}\label{eng.c}
\int_{\Om\times\R^3}|v|^2f(t,x,v)\sqrt{\mu(v)}dxdv=0,
\end{equation}
\begin{equation}
\int_{\Omega \times \R^{3}}\{(x-x_{0})\times \varpi \}\cdot vf(t,x,v)
\sqrt{\mu }dxdv=0.  \label{axiscon2}
\end{equation}
Let us now use $\{e_1,e_2,e_3\}$ to denote the orthogonal basis of the $L^2_{x,v}$ space spanned by $\{1,(x-x_{0})\times \varpi \cdot v,v^2\}\sqrt{\mu}$
and define the $L_{x,v}^2$ projection $P$ as $P f=\sum\limits_{i=1}^3(f ,e_i)e_i,$ we also define $\{I-P\}f=f-Pf.$

The following result which has been proved in \cite[pp.777, Theorem 8]{Guo-2010} states the well-posedness and exponential decay of systems \eqref{g} and \eqref{wg}.
\begin{proposition}
\label{specularbd}
Assume that $\xi $ is
both strictly convex \eqref{scon} and analytic, and the mass \eqref{mass.c} and energy \eqref{eng.c} are conserved. In the case of $\Omega $ has
rotational symmetry \eqref{axis}, we also assume conservation of
corresponding angular momentum \eqref{axiscon2}. Let $\widetilde{f}_0=w_lf_{0}\in L^{\infty }$ with $l>6$. There exists a unique solution $f(t,x,v)$ to the system \eqref{g},
and $\widetilde{f}=U(t)\widetilde{f}_0$ to the system \eqref{wg}. Moreover, there exist
$\la_0>0$ and $C>0$ such that
%\eqref{g} admit a unique solution $U(t)g_0$ satisfying
\begin{equation*}\label{Ubd}
\left\|U(t)\widetilde{f}_0\right\|_{\infty}\leq Ce^{-\la_0t}\left\|\widetilde{f}_{0}\right\|_{\infty}.
\end{equation*}
\end{proposition}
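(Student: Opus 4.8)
This proposition restates \cite[Theorem 8]{Guo-2010}, so in principle one may simply invoke that result; let me nonetheless describe how I would carry out the argument, which runs along the $L^2$--$L^\infty$ scheme. The plan is: first to obtain $L^2$ well-posedness together with exponential decay in $L^2$ for \eqref{g}, and then to bootstrap to the weighted $L^\infty$ estimate for the semigroup $U(t)$ of \eqref{wg} by iterating the mild formulation along the specular backward cycles \eqref{specularcycle}.

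\textbf{Step 1: the $L^2$ theory for \eqref{g}.} I would first produce $f\in C_tL^2_{x,v}$ from the energy identity
\begin{equation*}
\frac12\frac{d}{dt}\|f(t)\|_2^2+(\widetilde L f,f)=0 ,
\end{equation*}
observing that the boundary flux $\frac12\int_\gamma(v\cdot n)|f|^2$ vanishes: the specular condition $f|_{\gamma_-}(v)=f(R_xv)$ and the sign reversal of $v\cdot n$ under $R_x$ make the $\gamma_+$ and $\gamma_-$ contributions cancel. Existence comes from running this identity on an approximating sequence (boundary penalisation or a mollified domain), and uniqueness is immediate from it. For decay I would combine the spectral gap $(\widetilde L f,f)\ge\delta\|\langle v\rangle^{1/2}(I-\mathbb P)f\|_2^2$, where $\mathbb P$ is the pointwise-in-$x$ orthogonal projection onto $N(\widetilde L)=\mathrm{span}\{\sqrt\mu,v_1\sqrt\mu,v_2\sqrt\mu,v_3\sqrt\mu,|v|^2\sqrt\mu\}$ (distinct from the three-dimensional global $P$ introduced above), with the macroscopic estimate bounding the hydrodynamic field $\mathbb P f$ in terms of $(I-\mathbb P)f$ modulo time derivatives handled by an interaction functional. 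The global conservation laws \eqref{mass.c}--\eqref{axiscon2} enter exactly here: they force $Pf(t)\equiv0$, eliminating the three undamped macroscopic modes --- the constant density and temperature perturbations and, when \eqref{axis} holds, the rigid-rotation momentum $\varpi\times(x-x_0)$ --- so that the full macroscopic estimate becomes available. Integrating over successive time slabs then gives $\|f(t)\|_2\le Ce^{-\lambda_1 t}\|f_0\|_2$.

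\textbf{Step 2: the weighted $L^\infty$ estimate for $U(t)$.} With $\widetilde f=w_lf$, equation \eqref{wg} reads $\partial_t\widetilde f+v\cdot\nabla_x\widetilde f+\nu\widetilde f=\mathcal K_w\widetilde f$, where $\mathcal K_w$ is the weighted Grad operator defined by $\widetilde L_w=\nu-\mathcal K_w$; it has an integral kernel $k_w(v,v')$ with Gaussian-type decay and $\int k_w(v,v')\,dv'\lesssim\langle v\rangle^{-1}$. I would integrate \eqref{wg} along the backward specular cycle $(X_{\mathbf{cl}},V_{\mathbf{cl}})$: since $|V_{\mathbf{cl}}(s)|\equiv|v|$ on a cycle and $\nu\gtrsim\langle v\rangle$, the transport-plus-damping part decays like $e^{-\lambda_0 t}$, and iterating the Duhamel representation twice expresses $\widetilde f(t,x,v)$ as a double $\mathcal K_w$-integral along the cycle. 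After discarding the large-$|v|$ part, the small-elapsed-time part, and the part carried by trajectories meeting $\partial\Omega$ almost tangentially --- the latter of small measure by the strict convexity of $\Omega$, which also bounds the number of reflections on any finite time interval --- the remaining double integral is controlled by $\varepsilon\sup_{0\le s\le t}\|\widetilde f(s)\|_\infty+C_\varepsilon\int_0^t\|f(s)\|_2\,ds$. Inserting the $L^2$ decay from Step 1 and absorbing the $\varepsilon$-term yields $\|U(t)\widetilde f_0\|_\infty\le Ce^{-\lambda_0 t}\|\widetilde f_0\|_\infty$, which in addition upgrades the $L^2$ solution of Step 1 to the unique solution in weighted $L^\infty$.

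\textbf{Expected main obstacle.} The delicate point is the macroscopic/elliptic estimate at the boundary in Step 1 --- bounding $\|\mathbb P f\|$ by $\|\langle v\rangle^{1/2}(I-\mathbb P)f\|_2$ modulo the conserved quantities for the specular problem. This is precisely where both the strict convexity and the analyticity of $\xi$ are indispensable: the specular condition only imposes $b\cdot n=0$ on $\partial\Omega$ for the momentum field $b$, and one has to show that the only nontrivial such field compatible with the macroscopic equations is the rigid rotation $\varpi\times(x-x_0)$ --- an argument using analyticity of the boundary --- whose residual degeneracy is then removed by \eqref{axiscon2}. The companion measure estimate for almost-grazing trajectories required in Step 2 is of the same geometric nature. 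Both are established in \cite{Guo-2010}, so it is enough here to cite that theorem.
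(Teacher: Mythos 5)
Your proposal is correct and matches the paper exactly: the paper gives no proof of this proposition, simply citing \cite[Theorem 8]{Guo-2010}, and your sketch faithfully reproduces the $L^2$--$L^\infty$ scheme of that reference (energy identity with cancelling specular boundary flux and macroscopic estimate for the $L^2$ decay, then the double Duhamel iteration along specular cycles with the convexity/analyticity input for grazing trajectories and the rigid-rotation null mode). Nothing further is needed beyond the citation, which is all the paper itself supplies.
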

With Proposition \ref{specularbd} in hand, we now provide the following lemma associated to the well-posedness of the system of \eqref{h2}
in weighted $L^\infty$ space.
\begin{lemma}\label{h2s}
Assume that $\xi $ is
both strictly convex \eqref{scon} and analytic, and $\Omega $ has
rotational symmetry \eqref{axis}.
Let $l>6$, assume $\sup\limits_t\|w_lg(t)\|_\infty<\infty$, there exists a unique global solution $h_2$ to the system
\begin{eqnarray}\label{h2g}
\left\{\begin{array}{ll}
&\pa_t h_2+v\cdot\na_xh_2+\widetilde{L} h_2=\mu^{-1/2}\chi_N Kg,\\[2mm]
&h_2(0,x,v)=0,\ \ h_2(t,x,v)|_-=h_2(t,x,R_xv).
\end{array}\right.
\end{eqnarray}
Moreover, if $P(h_2+\mu^{-1/2}g)=0$ and there exist $\eta_1>0$ and $\la_1>0$ such that $\|w_lg\|_\infty\leq \eta_1e^{-\la_1t}$, then there exists $0<\la_2<\la_1$
such that
\begin{equation}\label{deh2}\|w_lh_2\|_\infty\leq C\eta_1e^{-\la_2t}.
\end{equation}

\end{lemma}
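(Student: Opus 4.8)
The plan is to combine a macro--micro decomposition of $h_2$ with Duhamel's formula and the exponential decay of the linearized specular semigroup furnished by Proposition~\ref{specularbd}. Write $S:=\mu^{-1/2}\chi_N Kg$ for the forcing term in \eqref{h2g}. The cutoff $\chi_N$ confines the velocity to $|v|<N$, so $w_l\mu^{-1/2}\chi_N\langle v\rangle\le C_N$, while $Kg=Q(g,\mu)+Q_{\textrm{gain}}(\mu,g)$ and Lemma~\ref{op.es.lem} give $|Kg|\le C\|w_lg\|_\infty\langle v\rangle^{1-l}$; hence $\|w_lS(t)\|_\infty\le C_N\|w_lg(t)\|_\infty$, which is the only input on the source that is needed. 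Recalling $Pf=\sum_{i=1}^{3}(f,e_i)e_i$, I would set $h_2=Ph_2+h_2^\perp$ with $h_2^\perp:=\{I-P\}h_2$ and treat the two parts separately.

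For the macroscopic part, test \eqref{h2g} against each $e_i$. Since $e_1\propto\sqrt\mu$, $e_3\propto|v|^2\sqrt\mu$ and $e_2\propto\{(x-x_0)\times\varpi\}\cdot v\,\sqrt\mu$ all lie in $\ker\widetilde L$, satisfy $v\cdot\na_xe_i=0$, and obey $e_i(x,R_xv)=e_i(x,v)$ for $x\in\pa\Omega$ --- the last of these, for $e_2$, being precisely where the rotational symmetry \eqref{axis} is used, through $\{(x-x_0)\times\varpi\}\cdot n|_{\pa\Omega}=0$ --- the $\widetilde L$ term drops and the streaming term vanishes after an integration by parts in which the specular boundary condition cancels the incoming and outgoing fluxes. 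This leaves $\frac{d}{dt}(h_2,e_i)=(S,e_i)$, so with $h_2(0)=0$ one gets $Ph_2(t)=\sum_i\big(\int_0^t(S(s),e_i)\,ds\big)e_i$, which is finite for every $t$ and satisfies $\|w_lPh_2(t)\|_\infty\le C_Nt\sup_s\|w_lg(s)\|_\infty$.

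For the microscopic part, apply $\{I-P\}$ to \eqref{h2g}; by the same algebraic facts $P$ kills the streaming and collision contributions, so $h_2^\perp$ solves $\pa_th_2^\perp+v\cdot\na_xh_2^\perp+\widetilde Lh_2^\perp=\{I-P\}S$ with zero initial datum and the specular boundary condition (inherited because $Ph_2$ satisfies it too). The source $\{I-P\}S$ obeys the conservation laws \eqref{mass.c}, \eqref{eng.c}, \eqref{axiscon2} by construction, so Duhamel's principle and Proposition~\ref{specularbd} yield
\begin{equation*}
\|w_lh_2^\perp(t)\|_\infty\le\int_0^tCe^{-\la_0(t-s)}\|w_l\{I-P\}S(s)\|_\infty\,ds\le C_N\int_0^te^{-\la_0(t-s)}\|w_lg(s)\|_\infty\,ds,
\end{equation*}
where the last inequality uses $\|w_lPS\|_\infty\lesssim\|w_lS\|_\infty$ (from $|(S,e_i)|\lesssim\|w_lS\|_\infty$ and $\|w_le_i\|_\infty<\infty$). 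Together with the macroscopic bound this produces a global solution $h_2=Ph_2+h_2^\perp$; uniqueness follows by applying the uniqueness in Proposition~\ref{specularbd} to the difference of two solutions, which solves the homogeneous system with zero data.

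Finally, for the decay estimate \eqref{deh2} I would bring in the extra hypotheses. From $P(h_2+\mu^{-1/2}g)=0$ we have $(h_2,e_i)=-(\mu^{-1/2}g,e_i)$, and since $\mu^{-1/2}e_i$ is a polynomial of degree $\le2$ and $l>6$, $\int_{\Om\times\R^3}\langle v\rangle^{-l}|\mu^{-1/2}e_i|\,dxdv<\infty$, so $\|w_lPh_2(t)\|_\infty\le C\|w_lg(t)\|_\infty\le C\eta_1e^{-\la_1t}$. Inserting $\|w_lg(s)\|_\infty\le\eta_1e^{-\la_1s}$ into the displayed inequality gives $\|w_lh_2^\perp(t)\|_\infty\le C_N\eta_1\int_0^te^{-\la_0(t-s)}e^{-\la_1s}\,ds\le C\eta_1e^{-\la_2t}$ for any $\la_2<\min\{\la_0,\la_1\}$, the borderline case $\la_0=\la_1$ contributing only a factor $t$ that is absorbed by shrinking $\la_2$ slightly; adding the two bounds gives \eqref{deh2} with $0<\la_2<\la_1$. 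The step I expect to be most delicate is not an estimate but the bookkeeping underpinning the splitting: checking rigorously that $P$ commutes with $v\cdot\na_x$ on functions satisfying the specular reflection condition --- which is exactly where \eqref{axis} is indispensable --- and that the microscopic forcing genuinely lies in the range of $I-P$, so that the decay of Proposition~\ref{specularbd} applies as a black box.
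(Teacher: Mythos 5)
Your proposal is correct and follows essentially the same route as the paper: existence via Duhamel and Proposition~\ref{specularbd}, then the decomposition $h_2=Ph_2+\{I-P\}h_2$, with $Ph_2$ controlled directly by the constraint $P(h_2+\mu^{-1/2}g)=0$ and $\{I-P\}h_2$ by projecting the equation (using $v\cdot\na_xPh_2=0$, $\{I-P\}\widetilde L=\widetilde L\{I-P\}$, and the rotational symmetry for the boundary condition) and convolving the semigroup decay with $e^{-\la_1 s}$. The only cosmetic difference is that you additionally derive the ODE $\frac{d}{dt}(h_2,e_i)=(S,e_i)$ for the existence step, which the paper subsumes into the Duhamel representation.
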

\begin{proof}
Notice that  $\sup\limits_t\|w_lg(t)\|_\infty<\infty$, by Duhamel's principle and Proposition \ref{specularbd}, we see that there is indeed a unique solution $h_2$ to \eqref{h2g}, given by
\begin{equation*}\label{Uh2}
w_lh_2=\int_0^t U(t-s)w_l\mu^{-1/2}\chi_N [Kg](s)ds.
\end{equation*}
Next, if $P(h_2+\mu^{-1/2}g)=0$ and $\|w_lg\|_\infty\leq \eta_1e^{-\la_1t}$ with $l>6$, it is straightforward to see that
\begin{equation}\label{deph2}
\|w_l Ph_2\|_{\infty}\leq \|w_lP(\mu^{-1/2}g)\|_{\infty}\leq\sum\limits_{i=1}^3\|w_l(\mu^{-1/2}g,e_i)e_i\|_{\infty}
\leq C\|w_lg\|_\infty\leq C\eta_1e^{-\la_1t}.
\end{equation}
To prove \eqref{deh2}, it remains now to show the exponential decay rate for $\{I-P\}h_2,$ for this, we first act $\{I-P\}$ to \eqref{h2g} to obtain
\begin{eqnarray}\label{IPh2}
\left\{\begin{array}{ll}
&\pa_t \{I-P\}h_2+v\cdot\na_x\{I-P\}h_2+\widetilde{L} \{I-P\}h_2=\{I-P\}\left[\mu^{-1/2}\chi_N Kg\right],\\[2mm]
&\{I-P\}h_2(0,x,v)=0,\ \{I-P\}h_2(t,x,v)|_-=\{I-P\}h_2(t,x,R_xv),
\end{array}\right.
\end{eqnarray}
where we have used the fact that $\int_{\Omega\times \R^3}v\cdot\na_xh_2e_idxdv=0$ $(i\in\{1,2,3\})$, $v\cdot \na_xPh_2=0$ and $\{I-P\}\widetilde{L}h_2=\widetilde{L} \{I-P\}h_2$. Moreover, the fact that the rotational symmetry \eqref{axis} is also used to handle the boundary condition. Employing Proposition \ref{specularbd} again, we get from \eqref{IPh2}
\begin{equation*}\label{WIP}
w_l\{I-P\}h_2=\int_0^t U(t-s)w_l\{I-P\}\left\{\mu^{-1/2}\chi_N [Kg]\right\}(s)ds,
\end{equation*}
from which and Lemma \ref{op.es.lem} as well as the assumption on $g$, one has
\begin{equation}\label{WIP1}
\left\|w_l\{I-P\}h_2\right\|_{L^\infty}\leq C_N\eta_1\int_0^t e^{-\la_0(t-s)}e^{-\la_1s}ds.
\end{equation}
Let $\la_2=\frac{1}{2}\min\{\la_0,\la_1\}$, \eqref{WIP1} further yields
\begin{equation*}\label{WIP2}
\left\|w_l\{I-P\}h_2\right\|_{L^\infty}\leq C_N\eta_1e^{-\la_2t}.
\end{equation*}
This together with \eqref{deph2} implies \eqref{deh2}, thus the proof of Lemma \ref{h2s} is complete.

\end{proof}

We now go back to \eqref{h1} which is associated to the solvability of $h_1$. We first prove the following results.
\begin{lemma}\label{h1s}
There exist constants $\de_0>0$ and $\la_1>0$, such that if
\begin{equation}\label{h0s}
\|w_lh_0\|_{\infty}<\de_0\ \textrm{and}\ \sup\limits_{0\leq t\leq \infty}e^{\la_1 t}\|w_lg(t)\|_{\infty}<\de_0,\ \ l>6,
\end{equation}
then there exists a unique global solution
$h_1(t,x,v)$ to the initial boundary value problem
\begin{eqnarray}\label{h1g}
\left\{\begin{array}{ll}
&\pa_t h_1+v\cdot\na_xh_1+\nu h_1=\chi_N^c Kh_1+Q(h_1+g,h_1+g),\\[2mm]
&h_1(0,x,v)=h_0(x,v),\ \ h_1(t,x,v)|_-=h_1(t,x,R_xv),
\end{array}\right.
\end{eqnarray}
satisfying
\begin{equation}\label{h1decay}
\|w_lh_1(t)\|_{\infty}\leq Ce^{-\la_1 t}\|w_lh_0\|_{\infty},
\end{equation}
where $C>0$ and is independent of $\de_0.$

\end{lemma}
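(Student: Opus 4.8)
The plan is to solve \eqref{h1g} by a contraction-mapping / iteration argument in the Banach space
\[
\mathcal{X}=\Bigl\{h_1 : \|h_1\|_{\mathcal{X}}:=\sup_{0\le t\le\infty}e^{\la_1 t}\|w_lh_1(t)\|_\infty<\infty\Bigr\},
\]
treating the linear part $\pa_t+v\cdot\na_x+\nu-\chi_N^cK$ as the generating operator and the quadratic term $Q(h_1+g,h_1+g)$ together with the boundary reflection as a perturbation. The first step is to record the decay estimate for the linear propagator: because $\chi_N^cK$ is controlled by $\eps\langle v\rangle\lesssim\eps\nu$ with $\eps=\eps(N)\to0$ as $N\to\infty$ (this is exactly the smallness built into the decomposition $L=\nu-\chi_N^cK-\chi_NK$, cf.\ \eqref{op.es2}), for $N$ large the operator $\nu-\chi_N^cK$ still satisfies a lower bound $\ge\tfrac12\nu_0\langle v\rangle\ge\tfrac12\nu_0$ in the weighted $L^\infty$ norm. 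Writing the solution of \eqref{h1g} along specular cycles as in \eqref{f.ep}, with the damping factor $e^{-\int_s^t(\nu-\chi_N^cK)\,d\tau}$ replaced by its bound $e^{-\la_0'(t-s)}$ for some $\la_0'>0$, gives
\[
\|w_lh_1(t)\|_\infty\le e^{-\la_0' t}\|w_lh_0\|_\infty+\int_0^te^{-\la_0'(t-s)}\bigl\|w_lQ(h_1+g,h_1+g)(s)\bigr\|_\infty\,ds.
\]

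The second step is to estimate the source term. By \eqref{op.es} of Lemma \ref{op.es.lem}, $\|w_lQ(h_1+g,h_1+g)(s)\|_\infty\lesssim (\|w_lh_1(s)\|_\infty+\|w_lg(s)\|_\infty)^2\langle v\rangle$; the extra $\langle v\rangle$ is absorbed by choosing $\la_1<\la_0'$ and noting $\langle v\rangle e^{-(\la_0'-\la_1)\,\text{dummy}}$ contributes only a constant once one measures everything against the weight $e^{-\la_1 t}$ — more precisely, multiplying through by $e^{\la_1 t}$ and using $\|w_lg(s)\|_\infty\le\de_0 e^{-\la_1 s}$ and the a priori bound $\|w_lh_1(s)\|_\infty\le A e^{-\la_1 s}$ yields
\[
e^{\la_1 t}\|w_lh_1(t)\|_\infty\le C\|w_lh_0\|_\infty+\frac{C}{\la_0'-\la_1}\,(A+\de_0)^2 .
\]
Choosing $\la_1=\tfrac12\la_0'$ and then $A=2C\|w_lh_0\|_\infty$, with $\de_0$ small enough that $C(A+\de_0)^2/(\la_0'-\la_1)\le C\|w_lh_0\|_\infty$, closes the a priori estimate \eqref{h1decay}. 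The third step is to run the actual iteration: define $h_1^{n+1}$ by the linear problem \eqref{h1g} with the quadratic term evaluated at $h_1^n$, inherit the uniform bound just proved, and estimate the difference $h_1^{n+1}-h_1^n$ exactly as in the proof of Proposition \ref{loc.ex} (equations \eqref{gnnop}--\eqref{gnct}): the quadratic structure makes the difference map a contraction with constant $\lesssim(A+\de_0)$, which is $<1$ after shrinking $\de_0$. Uniqueness and nonnegativity follow as before, and the continuity away from $\gamma_0$ for $\Omega$ strictly convex is obtained from the $L^\infty$ convergence exactly as in \cite[pp.804]{Guo-2010}.

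The main obstacle is the exponential gain for the \emph{linear} semigroup $\pa_t+v\cdot\na_x+\nu-\chi_N^cK$ in the presence of the specular boundary condition: unlike $\widetilde L$ in Proposition \ref{specularbd}, this operator has no spectral gap coming from hydrodynamic projections — the decay must come purely from the damping $\nu$, so one needs $\nu-\chi_N^cK$ to be bounded below by a positive constant uniformly in $v$, and one needs the specular-cycle representation \eqref{f.ep} to propagate this bound across arbitrarily many bounces without loss. This is where the strict convexity of $\Omega$ and the finiteness of the bounce count ($t_{m+1}\le0<t_m$ for finite $m$, cf.\ \cite[pp.768]{Guo-2010}) enter: the factor $e^{-\int_s^t(\nu-\chi_N^cK)\,d\tau}$ along a cycle is still bounded by $e^{-\la_0'(t-s)}$ because the integrand is pointwise $\ge\la_0'$ regardless of the geometry of the trajectory, so summing the cycle contributions as in \eqref{f.ep1} costs nothing. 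Once this is in place, everything else is a repetition of the contraction argument already carried out in Section \ref{lochom}.
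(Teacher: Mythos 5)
Your overall strategy coincides with the paper's: an iteration whose linear part is $\pa_t+v\cdot\na_x+\nu-\chi_N^cK$ with specular reflection, a Duhamel representation along the specular cycles, absorption of the $\chi_N^cK$ contribution using its smallness $[\tfrac{C}{N}+\eps](1+|v|)$ from Lemma \ref{op.es.lem}, and a contraction estimate for the differences. Two steps in your write-up, however, do not work as stated and need the more careful bookkeeping that the paper actually performs in \eqref{h1n}--\eqref{h1bd}.

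First, $\chi_N^cK$ is an integral operator in $v$, so the ``damping factor'' $e^{-\int_s^t(\nu-\chi_N^cK)\,d\tau}$ has no pointwise meaning and cannot be ``replaced by its bound $e^{-\la_0'(t-s)}$''; there is no ready-made exponential decay for this linear propagator to quote. The paper instead keeps only $e^{-\nu(v)(t-s)}$ in the Duhamel formula, places $\chi_N^cK h_1^{n+1}$ on the right-hand side as a source evaluated at the \emph{new} iterate, and absorbs the resulting term $[\tfrac{C}{N}+\eps](1+|v|)\|w_lh_1^{n+1}\|_\infty$ into the left-hand side after choosing $N$ large and $\eps$ small --- this absorption \emph{is} the proof of the exponential decay you are assuming. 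Second, your absorption of the factor $\langle v\rangle$ coming from \eqref{op.es} fails once the damping has been degraded to a $v$-independent rate $e^{-\la_0'(t-s)}$: the integral $\int_0^te^{-\la_0'(t-s)}\langle v\rangle e^{-2\la_1 s}\,ds$ carries an unbounded factor $\langle v\rangle$, so the constant $\tfrac{C}{\la_0'-\la_1}$ in your displayed inequality is not actually uniform in $v$. The paper avoids this by splitting $e^{-\nu(v)(t-s)}=e^{-\frac{\nu_0}{2}(t-s)}e^{-\frac{\nu(v)}{2}(t-s)}$ and using the $v$-dependent half of the damping, via $\int_0^te^{-\frac{\nu(v)}{2}(t-s)}(1+|v|)\,ds\leq C$, to cancel the $\langle v\rangle$ growth, while the $v$-independent half produces the rate $e^{-\la_1 t}$ with $0<\la_1<\nu_0/2$. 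Both repairs are standard, and once they are made your argument reduces to the one in the paper.
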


\begin{proof}
The existence of unique global solution of \eqref{h1g} will be proved via the following iteration scheme
\begin{eqnarray}\label{h1it}
\left\{\begin{array}{ll}
&\pa_t h^{n+1}_1+v\cdot\na_xh^{n+1}_1+\nu h^{n+1}_1=\chi_N^c Kh^{n+1}_1+Q(h^{n}_1+g,h^{n}_1+g),\\[2mm]
&h^{n+1}_1(0,x,v)=h_0(x,v),\ \ h^{n+1}_1(t,x,v)|_-=h^{n+1}_1(t,x,R_xv),
\end{array}\right.
\end{eqnarray}
starting with $h^{0}_1=h_0(x,v).$ In what follows, we first show that the approximation sequence $\{e^{\la_1t}w_lh_1^{n}\}_{n=0}^\infty$
is uniformly bounded, that is we will prove that if there exists $M_0>0$ such that $\sup\limits_{0\leq t\leq\infty}e^{\la_1t}\|w_lh_1^{n}(t)\|_\infty\leq M_0$ then $\sup\limits_{0\leq t\leq\infty}e^{\la_1t}\|w_lh_1^{n+1}(t)\|_\infty\leq M_0$. As for deriving \eqref{f.ep}, taking $(t,x,v)\notin [0,\infty)\times(\ga_0\cup\ga_-)$, by assuming $t_{m+1}\leq 0<t_m$ for some finite integer $m>0$,
we get from \eqref{h1it} that
\begin{equation}\label{h1n}
\begin{split}
w_lh^{n+1}_1(t,x,v)=&e^{-\nu(v) t}w_lh_0
+\sum\limits_{k=0}^{m-1}\int_{t_{k+1}}^{t_k}e^{-\nu(v)(t-s)}\left\{w_l\chi_N^c Kh^{n+1}_1\right\}(s,x_k+(s-t_k)v_k,v_k)ds\\
&+\int_{0}^{t_m}e^{-\nu(v)(t-s)}\left\{w_l\chi_N^c Kh^{n+1}_1\right\}(s,x_m+(s-t_m)v_m,v_m)ds
\\&+\sum\limits_{k=0}^{m-1}\int_{t_{k+1}}^{t_k}e^{-\nu(v)(t-s)}\left\{w_lQ(h^{n}_1+g,h^{n}_1+g)\right\}(s,x_k+(s-t_k)v_k,v_k)ds
\\&+\int_{0}^{t_m}e^{-\nu(v)(t-s)}\left\{w_lQ(h^{n}_1+g,h^{n}_1+g)\right\}(s,x_m+(s-t_m)v_m,v_m)ds.
\end{split}
\end{equation}
For $0<\la_1<\nu_0/2$, \eqref{h1n} further implies
\begin{equation}\label{h1bd}
\begin{split}
e^{\la_1t}&\|w_lh^{n+1}_1(t)\|_{_{\infty}}\\ \leq &\|w_lh_0\|_{_{\infty}}
+e^{\la_1t}\left\{\sum\limits_{k=0}^{m-1}\int_{t_{k+1}}^{t_k}+\int_{0}^{t_m}\right\}e^{-\frac{\nu_0}{2}(t-s)}e^{-\frac{\nu(v)}{2}(t-s)}
\left\{\left[\frac{C}{N}+\eps\right](1+|v|)\|w_lh^{n+1}_1\|_{_{\infty}}(s)\right\}ds
\\&+Ce^{\la_1t}\left\{\sum\limits_{k=0}^{m-1}\int_{t_{k+1}}^{t_k}+\int_{0}^{t_m}\right\}e^{-\frac{\nu_0}{2}(t-s)}e^{-\frac{\nu(v)}{2}(t-s)}
(1+|v|)\left\{\|w_lh^{n}_1\|^2_{_{\infty}}(s)+\de_0\|w_lh^{n}_1\|_{_{\infty}}(s)
\right\}ds
\\&+C\de_0^2e^{\la_1t}\left\{\sum\limits_{k=0}^{m-1}\int_{t_{k+1}}^{t_k}+\int_{0}^{t_m}\right\}
e^{-\frac{\nu_0}{2}(t-s)}e^{-\frac{\nu(v)}{2}(t-s)}e^{-2\la_1s}
(1+|v|)ds\\
\leq &\|w_lh_0\|_{_{\infty}}
+\left\{\sum\limits_{k=0}^{m-1}\int_{t_{k+1}}^{t_k}+\int_{0}^{t_m}\right\}e^{\la_1s}e^{-\frac{\nu(v)}{2}(t-s)}
\left\{\left[\frac{C}{N}+\eps\right](1+|v|)\|w_lh^{n+1}_1\|_{_{\infty}}(s)\right\}ds
\\&+C\left\{\sum\limits_{k=0}^{m-1}\int_{t_{k+1}}^{t_k}+\int_{0}^{t_m}\right\}e^{\la_1s}e^{-\frac{\nu(v)}{2}(t-s)}
(1+|v|)\left\{\|w_lh^{n}_1\|^2_{_{\infty}}(s)+\de_0\|w_lh^{n}_1\|_{_{\infty}}(s)
\right\}ds\\&+C\de_0^2\left\{\sum\limits_{k=0}^{m-1}\int_{t_{k+1}}^{t_k}+\int_{0}^{t_m}\right\}e^{-\frac{\nu(v)}{2}(t-s)}e^{-\la_1s}
(1+|v|)ds,
\end{split}
\end{equation}
according to Lemma \ref{op.es.lem} and \eqref{h0s}. Choosing $N>0$ large enough and $\eps>0$ suitably small, we then have from \eqref{h1bd}
that
\begin{equation}\label{h1bd2}
\begin{split}
\sup\limits_{0\leq t\leq\infty}e^{\la_1t}\|w_lh^{n+1}_1(t)\|_{_{\infty}}\leq C\de_0+ C(\de_0^2+\de_0M_0+M_0^2).
\end{split}
\end{equation}
Therefore $\sup\limits_{0\leq t\leq\infty}e^{\la_1t}\|w_lh^{n+1}_1\|_{_{\infty}}\leq M_0$ follows if both $\de_0>0$ and $M_0>0$ are sufficiently small.

Next we prove that $\{e^{\la_1t}w_lh_1^{n}\}_{n=0}^\infty$ is a Cauchy sequence in $L^{\infty}-$norm. For this, from the difference of system \eqref{h1n} for $n+1$ and $n$, it follows for $(t,x,v)\notin [0,\infty)\times(\ga_0\cup\ga_-)$
\begin{equation*}\label{h1nd}
\begin{split}
&w_l(h^{n+1}_1-h^{n}_1)(t,x,v)\\
&\quad=\sum\limits_{k=0}^{m-1}\int_{t_{k+1}}^{t_k}e^{-\nu(t-s)}\left\{w_l\chi_N^c K(h^{n+1}_1-h^{n}_1)\right\}(s,x_k+(s-t_k)v_k,v_k)ds
\\&\qquad+\int_{0}^{t_m}e^{-\nu(t-s)}\left\{w_l\chi_N^c K(h^{n+1}_1-h^{n}_1)\right\}(s,x_m+(s-t_m)v_m,v_m)ds
\\&\qquad+\sum\limits_{k=0}^{m-1}\int_{t_{k+1}}^{t_k}e^{-\nu(t-s)}\left\{w_l\left[Q(h^{n}_1+g,h^{n}_1+g)-Q(h^{n-1}_1+g,h^{n-1}_1+g)\right]\right\}
(s,x_k+(s-t_k)v_k,v_k)ds
\\&\qquad+\int_{0}^{t_m}e^{-\nu(t-s)}\left\{w_l\left[Q(h^{n}_1+g,h^{n}_1+g)-Q(h^{n-1}_1+g,h^{n-1}_1+g)\right]\right\}
(s,x_m+(s-t_m)v_m,v_m)ds.
\end{split}
\end{equation*}
Notice that
$$
Q(h^{n}_1+g,h^{n}_1+g)-Q(h^{n-1}_1+g,h^{n-1}_1+g)=Q(h^{n}_1-h^{n-1}_1,h^{n}_1+g)+Q(h^{n-1}_1+g,h^{n}_1-h^{n-1}_1).
$$
Applying Lemma \ref{op.es.lem} and \eqref{h0s} again, we have
\begin{equation*}\label{h1nd2}
\begin{split}
\sup\limits_{0\leq t\leq\infty}e^{\la_1t}\|w_l(h^{n+1}_1-h^{n}_1)(t)\|_\infty\leq C(M_0+\de_0)\sup\limits_{0\leq t\leq\infty}e^{\la_1s}\|w_l(h^{n}_1-h^{n-1}_1)(t)\|_\infty,
\end{split}
\end{equation*}
where the fact that
$\sup\limits_{0\leq t\leq\infty}e^{\la_1t}\|w_lh_1^{n}(t)\|_\infty\leq M_0$ was also used.
Therefore $\{e^{\la_1t}w_lh_1^{n}\}_{n=0}^\infty$ is a Cauchy sequence in $L^\infty-$norm, the limit $h_1$ is a desired solution of \eqref{h1g}. Moreover, letting $M_0\leq \|w_lh_0\|_\infty$ and taking limit in \eqref{h1bd2} leads us to \eqref{h1decay}, this ends the proof of Lemma \ref{h1s}.

\end{proof}

Once Lemmas \ref{h2s} and \ref{h1s} are obtained, we can now complete
\begin{proof}
[The proof of Proposition \ref{h.exist}]
Recall \eqref{h.eq} is equivalent to \eqref{h1} and \eqref{h2}. We approximate the system of equations \eqref{h1} and \eqref{h2}
as follows:
\begin{eqnarray}\label{h11}
\left\{\begin{array}{ll}
&\pa_t h^{n+1}_1+v\cdot\na_xh^{n+1}_1+\nu h^{n+1}_1=\chi_N^c Kh^{n+1}_1+Q(h^{n+1}_1+\sqrt{\mu}h^n_2,h^{n+1}_1+\sqrt{\mu}h^n_2),\\[2mm]
&h^{n+1}_1(0,x,v)=h_0(x,v),\ \ h^{n+1}_1(t,x,v)|_-=h^{n+1}_1(t,x,R_xv),
\end{array}\right.
\end{eqnarray}
\begin{eqnarray}\label{h22}
\left\{\begin{array}{ll}
&\pa_t h^{n+1}_2+v\cdot\na_xh^{n+1}_2+\widetilde{L} h^{n+1}_2=\mu^{-1/2}\chi_N Kh^{n+1}_1,\\[2mm]
&h^{n+1}_2(0,x,v)=0,\ \ h^{n+1}_2(t,x,v)|_-=h^{n+1}_2(t,x,R_xv),
\end{array}\right.
\end{eqnarray}
and start with $h^{0}_1=h_0(x,v)$ and $h^0_2=0.$
The summation of \eqref{h11} and $\sqrt{\mu}\times\eqref{h22}$ yields
\begin{eqnarray}\label{h1ad2}
\left\{\begin{array}{ll}
&\pa_t \left\{h^{n+1}_1+\sqrt{\mu}h^{n+1}_2\right\}+v\cdot\na_x\left\{h^{n+1}_1+\sqrt{\mu}h^{n+1}_2\right\}+L h^{n+1}_1\\[2mm]
&\qquad\qquad\qquad=Q(h^{n+1}_1+\sqrt{\mu}h^n_2,h^{n+1}_1+\sqrt{\mu}h^n_2),\\[2mm]
&\left\{h^{n+1}_1+\sqrt{\mu}h^{n+1}_2\right\}(0,x,v)=h_0(x,v),\ \ \left\{h^{n+1}_1+\sqrt{\mu}h^{n+1}_2\right\}|_-=\left\{h^{n+1}_1+\sqrt{\mu}h^{n+1}_2\right\}(t,x,R_xv).
\end{array}\right.
\end{eqnarray}
Recall $F_0(x,v)=\mu+h_0(x,v)$, one sees that
$\mu^{-1/2}h_0(x,v)$ satisfies
\eqref{mass.c}, \eqref{eng.c} and
\eqref{axiscon2}, therefore, from \eqref{h1ad2}, it follows $P(h^{n+1}_2+\mu^{-1/2}h^{n+1}_1)=0$ for all $n\geq0$.
Furthermore, let us suppose $0<\eta_1\leq \|w_lh_0\|_\infty$ in Lemma \ref{h1s}, we first construct $h_1^1$ via Lemma \ref{h1s} with $g=h_2^0=0$, which has the require exponential decay \eqref{h1decay}, and then determine $h_2^1$ through Lemma \ref{h2s} with $g=h_1^1$.
then applying Lemmas \ref{h2s} and \ref{h1s} repeatedly and by means of an induction over $n$, we
deduce that the sequences $\{h^{n}_1\}_{n=0}^{\infty}$ and $\{h^{n}_2\}_{n=0}^{\infty}$ are well-defined and enjoy the following exponential decay estimates
\begin{equation}\label{h12ndec}
\|w_lh^{n}_1(t)\|_{\infty}\leq Ce^{-\la t}\|w_lh_0\|_{\infty},\ \|w_lh^{n}_2(t)\|_{\infty}\leq Ce^{-\la t}\|w_lh_0\|_{\infty},
\end{equation}
where $\la=\min\{\la_1,\la_2\}$ and $C$ is independent of $n$.

Next, we regard exactly the same iterative scheme for $h_1^{n+1}$
as in the proof of Lemma \ref{h1s} with $g$ replaced by $h^n_2$ . Since the bounds \eqref{h12ndec} are uniform with respect to $n$, we are able to
derive the same estimates as in the latter proof independently of $h_2^n$. Namely, $\{h_1^{n}\}_{n=0}^\infty$ can be proved
to be a Cauchy sequence in $L^\infty-$norm
and therefore converges
strongly towards a function $h_1$.
By \eqref{h12ndec} again, the sequence $\{h_2^{n}\}_{n=0}^\infty$
is uniformly bounded in $L^\infty-$norm
and thus converges, up to a subsequence, weakly-* toward a function $h_2$.

Furthermore, taking the weak limit inside the iterative
scheme, we see that $(h_1, h_2)$ is a solution to the system \eqref{h1} and \eqref{h2}, which implies
$h = h_1 + \sqrt{\mu}h_2$ is a solution to the perturbed system \eqref{h.eq}. In addition,
taking the limit inside the exponential decays \eqref{h12ndec} leads us to the expected
exponential decay  \eqref{hdecay} for $h$. Finally, the uniqueness, positivity and continuity of the solution follows from the same argument as that of \cite[pp.62-68]{BG-2015}. This completes the proof of Proposition \ref{h.exist}.

\end{proof}

\section{The proof of Theorem \ref{mth}}\label{proof}
In this final section, we make use of Propositions \ref{loc.ex}, \ref{h.exist} and \ref{Gex} to complete
\begin{proof}[The proof of Theorem \ref{mth}]
Thanks to Proposition \ref{Gex}, given $\de_0>0$, there exists $t_{\ast\ast}>0$ such that
\begin{equation}\label{loct1}
\|w_{l_0}[G(t_{\ast\ast},v)-\mu]\|_\infty\leq \de_0/2.
\end{equation}
For such two constants $\de_0>0$ and $t_{\ast\ast}>0$,
if $\|w_{l_0}f_0\|$ is small enough, by Proposition \ref{loc.ex}, there exists a unique solution $F(t,x,v)$ to \eqref{be} such that
\begin{equation}\label{loct2}
\|w_{l_0}[F(t_{\ast\ast},x,v)-G(t_{\ast\ast},v)]\|_\infty\leq \de_0/2.
\end{equation}
\eqref{loct1} and \eqref{loct2} give rise to
\begin{equation*}\label{loct3}
\|w_{l_0}[F(t_{\ast\ast},x,v)-\mu(v)]\|_\infty\leq \de_0.
\end{equation*}
Now,
by Proposition \ref{h.exist} with initial time $t_{\ast\ast}$ and initial datum $h_0=F(t_{\ast\ast})-\mu$, we see that $F(t,x,v)\geq0$ exists
uniquely also for $t>t_{\ast\ast}$ and converges to $\mu$ as $t\rightarrow\infty$. This finishes the proof of Theorem \ref{mth}.
\end{proof}

\medskip

\noindent {\bf Acknowledgements:} YG was supported in part by NSFC grant 10828103, NSF grant 1209437, Simon Research
Fellowship and BICMR. SQL was
supported by grants from the National Natural Science Foundation of China (contracts: 11471142, 11271160 and 11571063) and China
Scholarship Council.
SQL would like to express his gratitude for the generous hospitality of the Division of Applied Mathematics at Brown University during his visit.

%\newpage

\end{document}